\newcommand{\N}{\mathbb{N}}
\newcommand{\Z}{\mathbb{Z}}
\newcommand{\C}{\mathbb{C}}
\newcommand{\e}{\varepsilon}
\newcommand{\fhi}{\varphi}
\newcommand{\norm}{\Arrowvert}
\newcommand{\ten}{\otimes}
\newcommand{\K}{\mathbb K}
\newcommand{\D}{\mathbb D}
\newcommand{\Cu}{\mathrm{Cu}}
\newcommand{\V}{\mathcal V}
\newcommand{\JS}{\mathcal Z}
\newcommand{\rank}{\textup{rank}}
\theoremstyle{definition} \newtheorem{mindef}{Definition}[section]
\newtheorem{remark}[mindef]{Remark}
\newtheorem{q}[mindef]{Question}
\theoremstyle{plain} \newtheorem{theo}[mindef]{Theorem}
\newtheorem{lemma}[mindef]{Lemma}
\newtheorem{cor}[mindef]{Corollary}
\newtheorem{prop}[mindef]{Proposition}
\author{Martin S. Christensen\footnote{This work was completed as a PhD-student at the University of Copenhagen}}
\date{}
\title{Regularity of Villadsen algebras and characters on their central sequence algebras}
\renewcommand\@biblabel[1]{#1.}
\begin{document}

\maketitle

\begin{abstract}
We show that if $A$ is a simple Villadsen algebra of either the first type with seed space a finite dimensional CW complex, or of the second type, then $A$ absorbs the Jiang-Su algebra tensorially if and only if the central sequence algebra of $A$ does not admit characters. 

Additionally, in a joint appendix with Joan Bosa, we show that the Villadsen algebra of the second type with infinite stable rank fails the Corona Factorization Property, thus providing the first example of a unital, simple, separable and nuclear $C^\ast$-algebra with a unique tracial state which fails to have this property.
\end{abstract}

\section{Introduction}

Villadsen algebras, introduced by Villadsen in \cite{villadsen-perforation.} and \cite{villadsen-sr.}, respectively, fall into two types and both display properties not previously observed for simple AH algebras. Together they form a class of unital, simple and separable AH algebras exhibiting a wide range of exotic behaviour; arbitrary stable and real rank, arbitrary radius of comparison, and perforation in their ordered $K_0$ groups and Cuntz semigroups.

The first type of Villadsen algebras was introduced in \cite{villadsen-perforation.} as the first examples of unital, simple AH algebras with perforation in their ordered $K_0$ groups. In particular, they were the first examples of simple AH algebras without slow dimension growth. Modifying the construction, Toms exhibited for each positive real number $r>0$ a unital, simple AH algebra with rate of growth $r$ (in the sense that the radius of comparison is $r$); see \cite{toms-smooth.min.}. The techniques introduced by Villadsen also played a crucial role in Rørdam's construction in \cite{rordam-finite.and.infinite.projection.} of a simple, separable and nuclear $C^\ast$-algebra in the UCT class containing an infinite and a non-zero finite projection, the  first counterexample to the Elliott conjecture in its previous incarnation. In \cite{toms-the.classification.problem.} Toms used a modification of the AH algebras in \cite{villadsen-perforation.} to provide a particularly egregious counterexample to the previous Elliott conjecture. Toms and Winter gave a formal definition of Villadsen algebras of the first type in \cite{toms.winter-vi.algs.}, which includes Villadsen's original constructions, and the subsequent modifications of Toms in \cite{toms-the.classification.problem.} and \cite{toms-smooth.min.}. In the same paper they confirmed what has later been named the Toms--Winter conjecture for this class of $C^\ast$-algebras, i.e., they showed that for a simple Villadsen algebra of the first type with seed space a finite dimensional CW complex (see Definition \ref{def-villadsen.algebra.}), the regularity properties Jiang-Su stability, strict comparison of positive elements, and finite decomposition rank are equivalent. The latter regularity property, or even the weaker requirement of finite nuclear dimension, has since been proven to suffice for classification, under the additional assumption of UCT (the complete proof of this has a long history and is the work of many hands, but the final steps were carried out in \cite{gong.lin.niu-classification.of.finite.simple.algebras.},\cite{elliott.gong.lin.niu-classification.} and \cite{tikuisis.white.winter-quasidiagonality.}).

The second type of Villadsen algebras was introduced in \cite{villadsen-sr.} as the first examples of simple AH algebras with stable rank higher than one. In fact, every possible value of the stable rank is achieved, i.e., for each $1\leq k\leq \infty$ a unital, simple AH algebra $\mathcal V_k$ is constructed such that $\text{sr}(\mathcal V_k)=k+1$, and the real rank satisfies $k\leq RR(\mathcal V_k)\leq k+1$. In addition, each $C^\ast$-algebra $\mathcal V_k$ has a unique tracial state and perforation in the ordered $K_0$ group, in particular $\V_k\ten\JS\not\cong \V_k$. Ng and Kucerovsky showed in \cite{kucerosvky.ng-perforation.and.cfp.} that $\mathcal V_2$ has the Corona Factorization Property, thus providing the first example of a simple $C^\ast$-algebra satisfying this property while having perforation in the ordered $K_0$ group. The construction also formed the basis for Toms' counterexample to the previous Elliott conjecture in \cite{toms-counterexample.}.

As indicated in the preceding paragraphs, the class of Villadsen algebras form a rich class containing examples of both regular $C^\ast$-algebras and $C^\ast$-algebras displaying a wide range of irregularity, while still remaining amenable to analysis. As such, they form a good `test class' for statements concerning simple and nuclear $C^\ast$-algebras.

The central sequence algebra of a unital separable $C^\ast$-algebra $A$ (see Section \ref{sec-csa.} for a definition), which we denote $F(A)$, was studied extensively by Kirchberg in \cite{kirchberg-central.sequences.}, wherein the notation $F(A)$ was introduced, and the definition of $F(A)$ was extended to not necessarily unital $C^\ast$-algebras in a meaningful way (for instance, $F(A)$ is unital whenever $A$ is $\sigma$-unital, and the assignment $A\mapsto F(A)$ is a \textit{stable} invariant). In analogy with the \emph{von Neumann} central sequence algebra of $\mathrm{II}_1$-factors, the central sequence algebra detects absorption of certain well-behaved $C^\ast$-algebras. More precisely, if $B$ is a unital, separable $C^\ast$-algebra with approximately inner half-flip (i.e., the two factor embeddings $B\to B\ten B$ are approximately unitarily equivalent), then $A\ten B\cong A$ if there exists a unital embedding $B\to F(A)$. If, moreover, $B\cong \bigotimes_{n=1}^\infty B$, e.g., when $B$ is the Jiang-Su algebra $\mathcal Z$, then $A\ten B\cong A$ if \textit{and only if} such an embedding exists. Significant progress in our understanding of the central sequence algebra of stably finite $C^\ast$-algebras was obtained by Matui and Sato in \cite{matui.sato-strict.comparison., matui.sato-decomposition.rank.}. In these papers they introduced property (SI), a regularity property which facilitates liftings of certain properties of a tracial variant of the central sequence algebra to the central sequence algebra itself (see for instance \cite[Proposition 3.9]{kirchberg.rordam-char.}). Furthermore, they prove that whenever $A$ is a unital, simple, separable and nuclear $C^\ast$-algebra with strict comparison, then $A$ has property (SI) and as a consequence, if $A$ has only finitely many extremal tracial states, then $\JS$ embeds unitally in $F(A)$ hence $A\ten\JS\cong A$. 

Prompted by the analogy with von Neumann $\mathrm{II}_1$ factors one might hope that the McDuff dichotomy (see \cite{mcduff-central.sequences.}) carries over to $C^\ast$-algebras. However, as proven by Ando and Kirchberg in \cite{kirchberg.hiroshi-non.commutative.central.sequence.}, the central sequence algebra $F(A)$ is non-abelian whenever $A$ is separable and not type $\mathrm{I}$. In addition, it can happen that $F(A)$ is non-abelian and contains no simple, unital $C^\ast$-algebra other than $\C$ (see \cite[Corollary 3.14]{kirchberg-central.sequences.}). Hence, non-commutativity of $F(A)$ does not suffice to conclude regularity. Addressing this issue, Kirchberg and Rørdam asked the following question in \cite{kirchberg.rordam-char.}.
\begin{q}\label{central.question.}
Let $A$ be a unital and separable $C^\ast$-algebra. Does it follows that $A\ten\JS\cong A$ if and only if $F(A)$ has no characters?
\end{q}

Another question under consideration in the present paper is the following: given a unital, simple $C^\ast$-algebra $A$ with a unique tracial state, when can one conclude that $A$ is regular? In certain situations, a unique tracial state is sufficient to conclude regularity and even classifiability by the Elliott invariant. For instance, Elliott and Niu showed in \cite{elliott.niu-zero.mean.dimension.} that if $X$ is a compact metrizable Hausdorff space and $\sigma$ is a minimal homeomorphism of $X$ such that the dynamical system $(X,\sigma)$ is uniquely ergodic, i.e., $C(X)\rtimes_\sigma\Z$ has a unique tracial state, then $C(X)\rtimes_\sigma\Z$ is $\JS$-stable and classifiable (this is not automatic, see \ \cite{giol.kerr-perforation.}). Similarly, as proven by Niu (see \cite[Theorem 1.1]{niu-mean.dimension.}) if $A$ is a unital, simple AH algebra with diagonal maps such that the set of extremal tracial states is countable, then $A$ is without dimension growth. In particular, any AH algebra of this type with a unique tracial state has real rank zero (cf.\ \cite{bdr-real.rank.of.inductive.limits.}). On the other hand, as demonstrated in \cite{villadsen-sr.}, a unique tracial state does not suffice to conclude either real rank zero or $\mathcal Z$ stability for general AH algebras. It is therefore natural to ask what (if any) regularity properties are implied by the existence of a unique tracial state.

The Corona Factorization Property was introduced by Kucerovsky and Ng in \cite{kucerovsky.ng-the.cfp.} and is related to both the theory of extensions and the question of when extensions are automatically absorbing (see for instance \cite{kucerovsky.ng-the.cfp.and.approximate.unitary.equivalence.}). It is a very mild regularity condition, which nonetheless does exclude the most exotic behaviour. For instance, if $A$ is a separable $C^\ast$-algebra satisfying the Corona Factorization Property and $M_n(A)$ is stable for some $n\in\N$ then $A$ must also be stable (see \cite[Proposition 4.7]{opr-cfp.}). Under the additional assumption that $A$ is simple and has real rank zero it also follows that $A$ is either stably finite or purely infinite. Examples of $C^\ast$-algebras failing the Corona Factorization Property have been provided in the literature. For instance, the $C^\ast$-algebras constructed in \cite{rordam-finite.and.infinite.projection.} and \cite{rordam-stability.not.stable.property.} fail the Corona Factorization Property

The main result of the present paper is that question \ref{central.question.} has an affirmative answer when $A$ is either a simple Villadsen algebra of the first type with seed space a finite dimensional CW complex or a Villadsen algebra of the second type (see Theorem \ref{theo-VI.main.result.} and Corollary \ref{cor-type.II.characters.} respectively). 

Additionally, in a joint appendix with Joan Bosa, we show that the Villadsen algebra of the second type with infinite stable rank fails to have the Corona Factorization Property, thus providing an example of a unital, simple, separable and nuclear $C^\ast$-algebra with a unique tracial state which fails this property (see Theorem \ref{theo-non.cfp.}). While examples of unital, simple, separable and nuclear $C^\ast$-algebras without the Corona Factorization Property are already known, as noted above, the example provided here is to the best of the authors' knowledge the first of its kind with a unique tracial state. 

I thank the anonymous referee for several useful comments, which led to an improved exposition, and for pointing out an unclear point in my proof of Lemma \ref{lem-ch.char.ob.}. I also thank Mikael Rørdam for many helpful discussions of the present paper.

\section{Background}

\subsection{The Central Sequence Algebra}\label{sec-csa.}
Let $A$ be a unital $C^\ast$-algebra, $\omega$ be a free ultrafilter on $\N$ and $\ell^\infty(A)$ denote the sequences $(a_n)_n\subseteq A$ such that $\sup_n\| a_n\|<\infty$. The ultrapower $A_\omega$ of $A$ with respect to $\omega$ is defined by
\[
A_\omega:=\ell^\infty(A)/\{(a_n)_n\in \ell^\infty(A)\mid \lim_{n\to\omega}\|a_n\|=0\}.
\]  
Given a sequence $(a_n)_n\in \ell^\infty(A)$ let $[(a_n)_n]\in A_\omega$ denote the image under the quotient map. There is a natural embedding $\iota\colon A\to A_\omega$ given by 
\(\iota(a)=[(a,a,a,\dots)]\). Since $\iota$ is injective it is often suppressed and $A$ is considered to be a subalgebra of $A_\omega$, a convention we shall follow here. The central sequence algebra $F(A)$ of $A$ is defined by \( F(A):=A_\omega\cap A' \). The notation $F(A)$ was introduced by Kirchberg in \cite{kirchberg-central.sequences.}, wherein the definition of the central sequence algebra was extended to (possibly non-unital) $\sigma$-unital $C^\ast$-algebras in a meaningful way. We retain this notation, although only unital $C^\ast$-algebras are considered here, to emphasize the connection with Kirchberg's work. Furthermore, the ultrafilter is suppressed in the notation, since the isomorphism class of (unital) separable sub-$C^\ast$-algebras $B\subseteq F(A)$ is independent of the choice of free ultrafilter. More precisely, if $B$ is a separable $C^\ast$-algebra and there exists a (unital) injective ${}^\ast$-homomorphism $B\to A_\omega\cap A'$ for some free ultrafilter $\omega$ on $\N$, then there exists a (unital) injective ${}^\ast$-homomorphism $B\to A_{\omega'}\cap A'$ for any other free ultrafilter $\omega'$ on $\N$. In particular, the question of whether $F(A)$ has characters is independent of the choice of free ultrafilter (see \cite[Lemma 3.5]{kirchberg.rordam-char.}). Whether $A_\omega\cap A'\cong A_{\omega'}\cap A'$ for arbitrary free ultrafilters $\omega$ and $\omega'$ on $\N$ depends on the Continuum Hypothesis (see \cite{ge.hadwin-ultraproducts.} and \cite[Theorem 5.1]{farah.hart.sherman-model.theory.I.}).

As described in \cite{kirchberg.rordam-char.}, building on results from \cite{robert.rordam-div.}, there is a useful relationship between divisibility properties of $F(A)$ and comparability properties of $\Cu(A)$. We rely on an elaboration of this technique to obtain our results.

%
%
%
%

\subsection{Vector Bundles and Characteristic Classes}
Readers who are unfamiliar with the theory of characteristic classes of (complex) vector bundles may wish to consult \cite{milnor+stasheff} for a general textbook on the subject. Alternatively, the papers \cite{rordam-finite.and.infinite.projection.} and \cite{villadsen-perforation.} also contains good summaries of (the relevant parts of) the theory.

In order to access the machinery of characteristic classes within the framework of $C^\ast$-algebras we need the following observation: Let $\K$ denote the compact operators acting on a separable, infinite-dimensional Hilbert space $\mathcal H$, let $p\in C(X)\ten\K$ be a projection and let $\xi_p$ denote vector bundle over $X$ given by
$$\xi_p:=\{(x,v)\in X\times\mathcal H\mid v\in p(x)(\mathcal H)\}.$$
It is a consequence of Swan's Theorem that the assignment $p\mapsto\xi_p$ induces a one-to-one correspondence of Murray-von Neumann equivalence classes of projections in $C(X)\ten\K$ with isomorphism classes of
vector bundles over $X$, in such a way that $q\precsim p$ if and only if there
exists a vector bundle $\eta$ over $X$ such that $\xi_q\oplus\eta\cong \xi_p$.
We shall be concerned with the ordering of vector bundles  according to
the above described pre-order. For this purpose we employ the machinery of characteristic classes of vector bundles described below, a technique pioneered by Villadsen in \cite{villadsen-perforation.} and \cite{villadsen-sr.}.

Given a compact Hausdorff space $X$ and vector bundle $\omega$ of (complex) fibre
dimension $k$, the \emph{total Chern class} $c(\omega)\in H^*(X)$ is 
$$c(\omega)=1+\sum_{i=1}^\infty c_i(\omega),$$
where $c_j(\omega)\in H^{2j}(X)$ is the $j$'th Chern class for each $1\leq j\leq k$, and $c_j(\omega)=0$ whenever $j>k$. 
Furthermore, the top Chern class $c_k(\omega)$ is the Euler class $e(\omega)$ of $\omega$. We will simply refer to $c(\omega)$ as the Chern class of $\omega$, rather than the \emph{total} Chern class.
The Chern class has the following properties:
\begin{enumerate}
\item If $\theta_k$ denotes the trivial vector bundle of fibre dimension
$k\in\N$, then $c(\theta_k)=1\in H^0(X)$ for any $k\in\N$.
\item For arbitrary vector bundles $\omega,\eta$ over $X$ we have \(c(\omega\oplus\eta)=c(\omega)c(\eta)\),
where the product is the cup product in the cohomology ring $H^*(X)$.
\item If $Y$ is another compact Hausdorff space and $f\colon Y\to X$ is continuous
then \(c(f^*(\omega))=f^*(c(\omega)) \).
\end{enumerate}
Properties (ii) and (iii) above also holds for the Euler class, while
the first property instead becomes $e(\theta_k)=0$ for all $k\in\N$. This can
be deduced from the above description of the Chern class.

In the following sections it will suffice to find a reasonably good method for determining
which Chern classes of a vector bundle are non-zero. Such a method is provided by the following observation. Given a finite number of sets $X_1,\dots,X_n$, let $\rho_j\colon X_1\times\cdots\times X_n\to X_j$ denote the $j$'th coordinate projection. If each of the spaces $X_1,\dots,X_n$ is a finite CW-complex such that $H^i(X_j)$ is a free $\Z$-module for each $i$ and $j$, it follows from the Künneth formula (see \cite[Theorem A.6]{milnor+stasheff}) that the map 
$$\mu\colon H^{i_1}(X_1)\ten H^{i_2}(X_2)\ten\cdots \ten H^{i_n}(X_n)\to H^{i}(X_1\times X_2\times\cdots\times X_n),$$
where $i=\sum_{k=1}^n i_k$, given by
$$a_1\ten a_2\ten\cdots\ten a_n\mapsto \rho_1^\ast(a_1)\rho_2^\ast(a_2)\cdots\rho_n^\ast(a_n),$$
is injective. A particular application of this observation is the following: suppose that $X_1,\dots,X_n$ satisfies the hypothesis above and,
for each $i=1,\dots,n$, that $\xi_i$ is a vector bundle over $X_i$ 
such that $e(\xi_i)\in H^*(X_i)$ is non-zero for $i=1,\dots,n$. Since each $H^i(X_j)$ is without torsion, the element $e(\xi_1)\ten\cdots\ten e(\xi_n)$ is also non-zero, whence it follows from naturality of the Euler class and the product formula above that
\begin{align*}
e\big(\rho_1^*(\xi_1)\oplus\rho_2^*(\xi_2)\oplus\cdots\oplus\rho_n^*(\xi_n)\big)
&=\rho_1^\ast(e(\xi_1)) \rho^\ast_2(e(\xi_2))\cdots\rho^\ast_n(e(\xi_n))\\
&=\mu(e(\xi_1)\ten\cdots\ten e(\xi_n))\neq 0.
\end{align*}
We will apply this observation only to the situation where each $X_i$ is either of the form $(S^2)^k$ for some $k$ or a complex projective space $\C P^k$, in which case the hypothesis' are satisfied.

%
%
%
%

\subsection{The Cuntz Semigroup, Comparison and Divisibility}
We give a brief introduction to the Cuntz semigroup as defined in \cite{CEI-the.cuntz.semigroup.}. We restrict our attention to the properties needed in the current exposition, and interested readers should consult \cite{CEI-the.cuntz.semigroup.} or \cite{APT-the.cuntz.semigroup.} for a fuller exposition.

Let $A$ be a $C^\ast$-algebra and let $a,b\in A_+$. We say that $a$ is Cuntz
dominated by $b$, and write $a\precsim b$, if there exists a sequence
$(x_n)_n\subseteq A$ such that \(\| a-x_n^*bx_n\|\to 0 \).
We say that $a$ is Cuntz equivalent to $b$, and write $a\sim b$, if $a\precsim
b$ and $b\precsim a$. Let $\mathbb K$ denote the compact operators on
a separable, infinite-dimensional Hilbert space and define
\[
\Cu(A):=(A\ten\K)_+/\!\sim.
\]
We write $\langle a\rangle$ for the equivalence class of an element $a\in
(A\ten\K)_+$. Then $\Cu(A)$ becomes an ordered abelian semgroup when equipped
with the operation
\[
\langle a\rangle+\langle b\rangle:=\langle a\oplus b\rangle, \quad a,b\in
(A\ten\K)_+
\]
and order defined by $\langle a\rangle \leq \langle b\rangle$ if and only if
$a\precsim b$. Additionally, any upwards directed countable set $S\subseteq \Cu(A)$ admits a supremum. Given $x,y\in\Cu(A)$ we say that $x$ is \emph{compactly contained} in $y$, and write $x\ll y$, if for any increasing sequence $(y_k)_k\subseteq\Cu(A)$ with $\sup_k y_k=y$ there exists $k_0\in\N$ such that $x\leq y_{k_0}$. Equivalently, if $a,b\in (A\ten\K)_+$ then $\langle a\rangle \ll \langle b\rangle$ if and only if there exists $\e>0$ such that $a\precsim (b-\e)_+$. An element $x\in\Cu(A)$ satisfying $x\ll x$ is said to be \textit{compact}. Note that $\langle p\rangle$ is compact whenever $p\in (A\ten\K)_+$ is a projection.

The following proposition is a strengthening of \cite[Theorem 4.9]{kirchberg.rordam-char.} with essentially the same proof. Although the strengthening is minor, it is crucial to Theorem \ref{theo-VI.main.result.} and Corollary \ref{cor-type.II.characters.}. 
\begin{prop}\label{cor-skarp.kr.}
Let $A$ be a unital, separable $C^\ast$-algebra. If $F(A)$ has no characters,
then for each $m\in\N$ there exists $n\in\N$ such that the following
holds: given $x,y_1,\dots,y_n\in \Cu(A)$ such that $x\leq my_i$ for all $i=1,\dots,n$, then
\(x\leq\sum_{i=1}^n y_i\).
\end{prop}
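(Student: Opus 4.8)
The plan is to realise, as the text promises, the absence of characters on $F(A)$ as a uniform divisibility property of the unit of $F(A)$ in $\Cu(F(A))$, and then to transfer this divisibility into a comparison statement in $\Cu(A)$ by multiplying with elements of $A$. First I would invoke the results of \cite{robert.rordam-div.}, in the form used in \cite{kirchberg.rordam-char.}: since $F(A)$ has no characters, for the given $m$ there is $n=n(m)\in\N$ such that $\langle 1_{F(A)}\rangle$ is $(m,n)$-divisible in $\Cu(F(A))$. Because $F(A)$ is unital and $1_{F(A)}$ is therefore a projection, its class is compact, so the divisibility can be witnessed by a single element: there exists $e\in F(A)_+$ with
\[
m\langle e\rangle\le\langle 1_{F(A)}\rangle\qquad\text{and}\qquad \langle 1_{F(A)}\rangle\le n\langle e\rangle
\]
in $\Cu(F(A))$. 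This $n$ depends only on $m$, and $e$ depends on neither $x$ nor the $y_i$; this uniformity is exactly what the statement requires.

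Next I would carry out the transfer. Write $x=\langle a\rangle$ and $y_i=\langle b_i\rangle$ with $a,b_i\in(A\ten\K)_+$, and regard $a$, the $b_i$ and $e$ inside $(A\ten\K)_\omega$, noting that $e$ and its amplifications commute exactly with $A\ten\K$ because $F(A)=A_\omega\cap A'$. Consider the positive elements $ae=a^{1/2}ea^{1/2}$ and $b_ie$. Since $e$ commutes with every Cuntz witness coming from $A\ten\K$, any subequivalence may be multiplied by $e$: from $x\le m y_i$ I get $\langle ae\rangle\le m\langle b_ie\rangle$, and from $m\langle e\rangle\le\langle 1_{F(A)}\rangle$ I get $m\langle b_ie\rangle\le\langle b_i\rangle=y_i$; hence $\langle ae\rangle\le y_i$ for each $i=1,\dots,n$. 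Summing these $n$ inequalities gives $n\langle ae\rangle\le\sum_{i=1}^n y_i$, while multiplying $\langle 1_{F(A)}\rangle\le n\langle e\rangle$ by $a$ gives $\langle a\rangle\le n\langle ae\rangle$. Chaining the two yields $x\le\sum_{i=1}^n y_i$, a priori in $\Cu\big((A\ten\K)_\omega\big)$. A final step brings this back to $\Cu(A)$: if $a,c\in(A\ten\K)_+$ and $a\precsim c$ in $(A\ten\K)_\omega$, then $\|a-z^*cz\|<\e$ for a witness $z$ forces $(a-\e)_+\precsim c$ already in $A\ten\K$, so that the canonical map $\Cu(A)\to\Cu(A_\omega)$ reflects the order on classes coming from $A$; applying this with $c$ a representative of $\sum_{i=1}^n y_i$ gives $x\le\sum_{i=1}^n y_i$ in $\Cu(A)$.

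I expect the main obstacle to be the careful justification of the two ``tensoring'' steps, that is, verifying identities such as $z^*b_i^{\oplus m}z\,e=z^*(b_ie)^{\oplus m}z$ in the ultrapower together with the norm estimates that survive right multiplication by the contractive central element $e$; this is precisely where the exact commutation afforded by $F(A)=A_\omega\cap A'$ and the bookkeeping with the amplifications $e^{\oplus m}$ must be handled with care. The potential difficulty coming from the compact-containment relation $\ll$ implicit in the definition of $(m,n)$-divisibility is avoided at the outset, since compactness of $\langle 1_{F(A)}\rangle$ permits a single witness $e$. Finally, this is where the strengthening of \cite[Theorem 4.9]{kirchberg.rordam-char.} comes essentially for free: the inequality $\langle ae\rangle\le y_i$ is derived for each $i$ using only $x\le my_i$, so the $y_i$ need not coincide, and it is the summation of these distinct inequalities that produces $x\le\sum_{i=1}^n y_i$ rather than merely $x\le ny$.
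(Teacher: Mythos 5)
The gap is in your first step, and that step is where all the weight of the proposition lies. The implication ``$F(A)$ has no characters $\Rightarrow$ for each $m$ there is $n$ such that $\langle 1_{F(A)}\rangle$ is $(m,n)$-divisible'' cannot simply be invoked from \cite{robert.rordam-div.}: for a general unital $C^\ast$-algebra, absence of characters does \emph{not} imply $(m,n)$-divisibility of the unit for every $m$, nor even weak $(m,n)$-divisibility. Concretely, $M_2$ has no characters, yet $\Cu(M_2)\cong\overline{\N}$ and the relation $3y\leq\langle 1_{M_2}\rangle=2$ forces $y=0$, so $\langle 1_{M_2}\rangle$ is not (even weakly) $(3,n)$-divisible for any $n$. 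What Robert and R{\o}rdam actually prove is that divisibility of the unit is governed by the absence of nonzero \emph{finite-dimensional representations}; absence of characters is strictly weaker. The statement you assert is indeed true for $F(A)$, but proving it is precisely the content of the paper's argument, which your proposal skips entirely: by \cite[Lemma 3.5]{kirchberg.rordam-char.} there is a unital separable sub-$C^\ast$-algebra $B\subseteq F(A)$ with no characters; by Kirchberg's result \cite[Corollary 1.13]{kirchberg-central.sequences.} the infinite maximal tensor power $D=\bigotimes_{k\in\N}B$ admits a unital ${}^\ast$-homomorphism into $F(A)$; and $D$, being an infinite tensor product of unital algebras each without characters, has no finite-dimensional representations at all, so that \cite[Corollary 5.6 (i) and Lemma 6.2]{robert.rordam-div.} make it weakly $(m,n)$-divisible for each $m$. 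This self-absorption of central sequence algebras (a unital separable $B\subseteq F(A)$ forces a unital copy of $\bigotimes_k B$ inside $F(A)$) is the key idea; without it, ``no characters'' cannot be upgraded to any divisibility statement, and your proof does not get off the ground. Relatedly, your diagnosis that the delicate points are the compactness of $\langle 1_{F(A)}\rangle$ and the commutation bookkeeping is off: those parts are routine.

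By contrast, your transfer steps (your second and third paragraphs) are correct, and they are essentially the paper's own route written out by hand: the multiplication by central witnesses is \cite[Lemma 6.1]{robert.rordam-div.}, and the passage from the ultrapower back to $\Cu(A)$ is \cite[Lemma 4.1]{kirchberg.rordam-char.}, the order embedding $\Cu(A)\to\Cu(P)$ where $P$ is the image of $A\ten_{\max}D\to A_\omega$. Note also that full divisibility with a single witness $e$ is not needed: weak $(m,n)$-divisibility of $D$ suffices, by pairing the $i$'th divisibility witness with the $i$'th given element $y_i$. Writing $x=\langle a\rangle$, $y_i=\langle b_i\rangle$, and letting $e_i\in (D\ten\K)_+$ represent witnesses with $m\langle e_i\rangle\leq\langle 1_D\rangle\leq\sum_{j}\langle e_j\rangle$, one has in $\Cu(A\ten_{\max}D)$
\[
\langle a\ten 1_D\rangle\leq\sum_{i=1}^n\langle a\ten e_i\rangle\leq\sum_{i=1}^n\langle b_i^{\oplus m}\ten e_i\rangle=\sum_{i=1}^n\langle b_i\ten e_i^{\oplus m}\rangle\leq\sum_{i=1}^n\langle b_i\ten 1_D\rangle,
\]
after which the order embedding gives $x\leq\sum_{i=1}^n y_i$ in $\Cu(A)$; the distinctness of the $y_i$ is handled exactly as in your summation step. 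So once your first step is repaired---which requires the separable-subalgebra and tensor-power argument above---the remainder of your proof coincides with the paper's.
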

\begin{proof}
It follows from \cite[Lemma 3.5]{kirchberg.rordam-char.} that there exists a unital,
separable sub-$C^\ast$-algebra $B\subseteq F(A)$ such that $B$ has no
characters. Hence, \cite[Corollary 5.6 (i) and Lemma 6.2]{robert.rordam-div.}
imply that for each $m\in\N$ there exists $n\in\N$ such that the infinite
maximal tensor product $C^\ast$-algebra \(D:=\bigotimes_{k\in\N}B\)
is weakly $(m,n)$-divisible, i.e., there exist elements $y_1,\dots,y_n\in\Cu(D)$ satisfying $my_i\leq \langle 1_D\rangle$, for all $i=1,\dots,n$, and $\langle 1_D\rangle\leq \sum_{j=1}^ny_j$. Note that since $B\subseteq F(A)$ is unital and separable, it
follows from \cite[Corollary 1.13]{kirchberg-central.sequences.} that there exists a unital ${}^*$-homomorphism
$\fhi:D\to F(A)$. Let $P\subseteq A_\omega$ denote the image under the natural
map $A\ten_{\max}D\to A_\omega$. By \cite[Lemma 4.1]{kirchberg.rordam-char.}
the induced map $\Cu(A)\to\Cu(P)$ is an order embedding, and therefore the result finally follows from \cite[Lemma
6.1]{robert.rordam-div.}.
\end{proof}

%
%
%
\section{Villadsen Algebras of the first type}\label{sec-the.first.type.}
In this section we study Villadsen algebras of the first type, as defined by Toms and Winter in \cite{toms.winter-vi.algs.} based on the construction by Villadsen in \cite{villadsen-perforation.}. We prove that for a simple Villadsen algebra $A$ of the first type with seed space a finite dimensional CW complex, $F(A)$ has no characters if and only if $A$ has strict comparison of positive elements (Theorem \ref{theo-VI.main.result.}). We also note in passing that if $A$ is not an AF algebra, then $A$ has real rank zero if and only if it has a unique tracial state (Proposition \ref{prop-real.rank.and.unique.trace.}).

For the readers convenience we recall the definition of a Villadsen algebra of the first type (see also \cite{toms.winter-vi.algs.}).
\begin{mindef}
Let $X, Y$ be a compact Hausdorff spaces and $n,m\in\N$ be given such that
$n\mid m$. A ${}^*$-homomorphism $\fhi:M_n\ten C(X)\to M_m\ten C(Y)$ is said to be 
\textbf{diagonal} if it has the form
\[
f\mapsto \begin{pmatrix}
f\circ \lambda_1 & 0 				& \cdots & 0\\
0 				 & f\circ\lambda_2  & 		 & \vdots \\
\vdots 			 &					& \ddots &  0\\
0 				 &	\cdots			& 0 	 & f\circ\lambda_{m/n}
\end{pmatrix},
\]
where each $\lambda_i\colon Y\to X$ is a continuous map for $i=1,\dots,m/n$. The
maps $\lambda_1,\dots,\lambda_{m/n}$ are called the \textbf{eigenvalue maps} of $\fhi$.

The map $\fhi$ above is said be a \textbf{Villadsen map of the first type}
(a $\mathcal V$I-map) if $Y=X^k$ for some $k\in\N$ and each eigenvalue map is
either a coordinate projection or constant.
\end{mindef}
Note that, in contrast with the construction in \cite{villadsen-perforation.}, given a $\V$I map $\fhi\colon C(X)\ten M_n\to C(X^k)\ten M_m$ as above, it is not necessary that the coordinate projections that occur as eigenvalue maps for $\fhi$ are distinct, nor that every possible coordinate projection $X^k\to X$ occurs as an eigenvalue map for $\fhi$.
\begin{mindef}\label{def-villadsen.algebra.}
Let $X$ be a compact Hausdorff space and let
$(n_i)_{i\in\N}$ and $(m_i)_{i\in\N}$ be sequences of natural numbers with $n_1=1$ and such that $m_i|m_{i+1}$ and $n_i|n_{i+1}$ for all $i\in\N$. Put
$X_i=X^{n_i}$. A unital $C^\ast$-algebra $A$ is said to be a
\textbf{Villadsen algebra of the first type} (a $\mathcal V$I algebra) if it
can be written as an inductive limit
\begin{align*}
A\cong \varinjlim(M_{m_i}\ten C(X_i),\fhi_i)\label{standard-decomposition},
\end{align*}
where each $\fhi_i$ is a $\mathcal V$I map. We refer to the above inductive
system as a \textbf{standard decomposition} for $A$ with \textbf{seed space}
$X$.
\end{mindef}
Although not required in the above definition, we shall only consider \emph{simple} $\V$I algebras in the present paper. Additionally, we require that the seed space is a finite-dimensional $CW$ complex. This is a particularly tractable class of $C^\ast$-algebras, as demonstrated by the following theorem due to Toms and Winter.

\begin{theo}[See \cite{toms.winter-vi.algs.}]\label{theo-toms.winter.}
Let $A$ be a simple $\mathcal V$I algebra admitting a standard decomposition with seed space a finite-dimensional $CW$ complex. The following are equivalent:
\begin{enumerate}
\item $A$ has finite decomposition rank.
\item $A$ is $\mathcal Z$-stable.
\item $A$ has strict comparison of positive elements.
\item $A$ has slow dimension growth as an AH algebra.
\end{enumerate}
\end{theo}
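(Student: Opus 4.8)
The plan is to prove the theorem as a cycle of implications, (i) $\Rightarrow$ (ii) $\Rightarrow$ (iii) $\Rightarrow$ (iv) $\Rightarrow$ (i), splitting it into a ``soft'' half that is valid for all simple, separable, unital, nuclear and stably finite $C^\ast$-algebras, and a ``hard'' half that genuinely uses the concrete AH structure. The implications (i) $\Rightarrow$ (ii) and (ii) $\Rightarrow$ (iii) constitute the soft half. For (i) $\Rightarrow$ (ii) I would invoke Winter's theorem that finite decomposition rank implies $\JS$-stability for simple, unital, separable and nuclear algebras. For (ii) $\Rightarrow$ (iii) I would invoke Rørdam's result that a simple $\JS$-stable algebra has an almost unperforated Cuntz semigroup, which is equivalent to strict comparison of positive elements. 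Neither step uses anything special about $\V$I algebras.

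The first place where the concrete structure enters is (iii) $\Rightarrow$ (iv), which I would prove contrapositively: if $A$ fails to have slow dimension growth, then $A$ fails strict comparison. Fast dimension growth means that, passing to a subsequence, the ratio $\dim(X_i)/\rank$ of topological dimension to the rank of the minimal projections stays bounded away from zero. Using the characteristic-class machinery recalled above — in particular the injectivity of the Künneth map and the resulting non-vanishing of Euler classes $e(\rho_1^*(\xi_1)\oplus\cdots\oplus\rho_n^*(\xi_n))$ of pullback bundles over products of spheres $(S^2)^k$ or projective spaces $\C P^k$ — one manufactures positive elements $a,b$ over $C(X_i)$ whose dimension functions satisfy $d_\tau(a)<d_\tau(b)$ for every tracial state $\tau$, yet for which $a\precsim b$ fails because a non-zero Euler class obstructs the existence of a complementary bundle. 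This is exactly the Villadsen--Toms perforation phenomenon, and by arranging the coordinate-projection eigenvalue maps so that the obstruction persists along the system, it yields a genuine failure of strict comparison in the limit $A$.

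For (iv) $\Rightarrow$ (i) I would estimate the decomposition rank directly from the AH model. The building blocks $M_{m_i}\ten C(X_i)$ have decomposition rank governed by $\dim(X_i)$, and for an inductive limit the decomposition rank is controlled by how the topological dimension compares with the multiplicity data of the connecting $\V$I maps. Slow dimension growth is precisely the statement that $\dim(X_i)$ becomes negligible relative to the rank, so Winter's dimension–rank estimates for approximately subhomogeneous algebras yield a finite, indeed uniformly bounded, decomposition rank for $A$, closing the cycle.

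The main obstacle is the implication (iii) $\Rightarrow$ (iv). The soft implications are essentially off-the-shelf, and (iv) $\Rightarrow$ (i) is a largely mechanical dimension count once the correct estimate is in place; but producing explicit witnesses to the failure of strict comparison from a mere lower bound on the dimension ratio requires carefully tracking Euler classes of the relevant bundles through the $\V$I connecting maps and verifying that the cohomological obstruction survives in the limit. Matching the choice of coordinate-projection eigenvalue maps to a non-vanishing cup product in $H^*(X^{n_i})$ is the delicate heart of the argument, and it is here that the original techniques of Villadsen and Toms are indispensable.
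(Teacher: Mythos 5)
A preliminary remark: the paper you are reading does not prove this theorem at all; it is stated as background and attributed to Toms and Winter, so your proposal has to be measured against the proof in \cite{toms.winter-vi.algs.}. Against that benchmark, three of your four implications are on target. The soft half is exactly as there: (i)$\Rightarrow$(ii) is Winter's theorem that finite decomposition rank implies $\JS$-stability for unital, simple, separable, nuclear, non-elementary $C^\ast$-algebras, and (ii)$\Rightarrow$(iii) is R{\o}rdam's theorem that $\JS$-stability gives an almost unperforated Cuntz semigroup, hence strict comparison. Your contrapositive route for (iii)$\Rightarrow$(iv) via Euler/Chern class obstructions is also the route of \cite{toms.winter-vi.algs.} (their Lemmas 4.1 and 5.1), and it is the same argument that Theorem \ref{theo-VI.main.result.} of the present paper adapts. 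One imprecision worth flagging: what the failure of slow dimension growth actually buys you, via the dichotomy of Lemma 5.1 of \cite{toms.winter-vi.algs.}, is that $\lim_i\lim_j N(i,j)/M(i,j)=1$, i.e.\ that asymptotically almost all eigenvalue maps are \emph{distinct coordinate projections}; it is this combinatorial statement, not a lower bound on $\dim(X_i)/\textup{rank}$, that feeds the K\"unneth argument, and the passage between the two uses simplicity and the $\V$I structure in an essential way.

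The genuine gap is (iv)$\Rightarrow$(i), which you describe as ``a largely mechanical dimension count.'' No such count exists, and the estimates you appeal to point the wrong way: for an inductive limit one has $\mathrm{dr}(A)\leq\liminf_i\mathrm{dr}(A_i)$, and $\mathrm{dr}\big(M_{m_i}\ten C(X_i)\big)=\dim(X_i)=n_i\dim(X)$, which is unbounded in every non-degenerate case (if the $n_i$ stay bounded, (i) is immediate but for reasons unrelated to your argument). Slow dimension growth controls only the \emph{ratio} $\dim(X_i)/m_i$, and neither Winter's decomposition rank estimates for homogeneous or subhomogeneous building blocks nor any other direct estimate converts a ratio bound into a bound on $\mathrm{dr}(A)$. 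What Toms and Winter actually do is invoke heavy classification machinery: slow dimension growth is first improved, using telescoping and the special combinatorics of $\V$I maps, so that the hypotheses of Gong's reduction theorem are met; that theorem (together with the Elliott--Gong--Li classification) replaces the given decomposition by an AH decomposition over spaces of dimension at most $3$, from which $\mathrm{dr}(A)\leq 3$ follows by the limit estimate. So your closing assessment is inverted: (iii)$\Rightarrow$(iv) is the novel contribution of \cite{toms.winter-vi.algs.}, but you sketched it correctly, whereas (iv)$\Rightarrow$(i) is the step resting on the deepest external input, and as written your argument for it would fail.
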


It follows directly from Definition \ref{def-villadsen.algebra.} that if $X$ is a zero-dimensional CW complex, i.e., is a finite discrete space, then the corresponding $\V$I algebra is a unital AF algebra. In the interest of the fluency of this exposition we shall henceforth assume that $\text{dim}(X)>0$, since the case $\text{dim}(X)=0$ often requires separate consideration, and unital, simple AF algebras are already well-understood. We proceed to introduce some notation.

For each $j\geq i$ let $\pi_{i,j}^{(s)}$ denote the $s$'th coordinate projection $X_j=X_i^{(n_j/n_i)} \to X_i$. Following standard notation, we set \( \fhi_{i,j}:=\fhi_{j-1}\circ\cdots\circ\fhi_i\),
when $j>i$, set $\fhi_{i,i}$ to be the identity map on $M_{m_i}\ten C(X_i)$,
and $\fhi_{i,j}$ to be the zero map when $j<i$. It is easy to check that
$\fhi_{i,j}\colon M_{m_i}\ten C(X_i)\to M_{m_j}\ten C(X_j)$ is a $\mathcal V$I map 
whenever $j> i$. For each $j> i$ let $E_{i,j}$ denote the set of 
eigenvalue maps of $\fhi_{i,j}$, and for each $\lambda\in E_{i,j}$ let $m(\lambda)$ denote the multiplicity of $\lambda$, i.e., the number of times $\lambda$ occurs as an eigenvalue map of $\fhi_{i,j}$. Furthermore, let
\begin{align*}
E^{(1)}_{i,j}&:=\{\lambda\in E_{i,j}\mid \lambda\text{ is a coordinate
projection}\},\\
E^{(2)}_{i,j}&:=\{\lambda\in E_{i,j}\mid \lambda\text{ is constant}\}.
\end{align*}
We will refer to the eigenvalue maps $\lambda\in E^{(2)}_{i,j}$ as point evaluations. For each $i<j$ write
\( \fhi_{i,j}=\psi_{i,j}\oplus\chi_{i,j}\), where $\psi_{i,j}$ is the diagonal ${}^\ast$-homomorphism corresponding to the eigenvalue maps of
$\fhi_{i,j}$, which are contained in $E^{(1)}_{i,j}$, and $\chi_{i,j}$ is the
diagonal ${}^\ast$-homomorphism corresponding to the eigenvalue maps of $\fhi_{i,j}$, which
are contained in $E^{(2)}_{i,j}$. Finally, we define the following numbers
\begin{align*}
N(i,j):=|E^{(1)}_{i,j}|,\quad \alpha(i,j):=\sum_{\lambda\in E^{(1)}_{i,j}}m(\lambda),\quad
M(i,j):=\sum_{\lambda\in E_{i,j}}m(\lambda).
\end{align*}
In other words, $M(i,j)$ denotes the multiplicity (number of eigenvalue maps)
of $\fhi_{i,j}$, $\alpha(i,j)$ denotes the number of coordinate projections
occurring in $\fhi_{i,j}$, while $N(i,j)$ denotes the number of \textit{different}
coordinate projections occurring in $\fhi_{i,j}$. Note that when $j>i$ we
have
\[
M(i,j)=M(i,j-1)M(j-1,j),\quad
N(i,j)=N(i,j-1)N(j-1,j),\]
\[\alpha(i,j)=\alpha(i,j-1)\alpha(j-1,j),\]
and that \( 0\leq \frac{N(i,j)}{M(i,j)}\leq \frac{\alpha(i,j)}{M(i,j)}\leq 1\).
In particular, the sequences
\[
\left(\frac{N(i,j)}{M(i,j)}\right)_{j>i}\quad \text{and} \quad
\left(\frac{\alpha(i,j)}{M(i,j)}\right)_{j>i}
\]
are decreasing and convergent. Furthermore, setting \( c_i=\lim_{j\to\infty}\frac{N(i,j)}{M(i,j)}\) and \(d_i=\lim_{j\to\infty}\frac{\alpha(i,j)}{M(i,j)}\), the sequences $(c_i)_i$ and $(d_i)_i$ are both increasing and $c_i\leq d_i$ for all $i\in\N$. In fact, it is easy to check that either $c_i=0$ for all $i\in\N$ or $\lim_{i\to\infty}c_i=1$. Similarly, either $d_i=0$ for all $i$ or $\lim_{i\to\infty}d_i=1$ (see the proof of \cite[Lemma 5.1]{toms.winter-vi.algs.}).

During the proof of Theorem \ref{theo-VI.main.result.} we need the following Chern class obstruction, essentially due to Villadsen, and later refined by Toms in \cite{toms-the.classification.problem.},\cite{toms-smooth.min.} and Toms--Winter in \cite{toms.winter-vi.algs.}. In the statement (and proof) of the lemma, we will use the following notation: given a finite cartesian power of spheres $(S^2)^n$, and $1\leq j \leq n$, let $\rho_j\colon (S^2)^n\to S^2$ denote the $j$'th coordinate projection.

%
%
%
%
\begin{lemma}\label{lem-ch.char.ob.}
Let $A$ be a Villadsen algebra which admits a standard decomposition
$(A_i,\fhi_i)$ with seed space a finite-dimensional $CW$-complex $X$ of
non-zero dimension. Furthermore, assume that, for some $i\in\N$, there exist $n\in\N$, a closed subset $X_i\supseteq K\cong (S^2)^n$ and a positive element $a\in A_i\ten \K$,  such that $a|_K$ is a projection for which the corresponding vector bundle $\xi$ is of the form $\xi\cong \rho_1^\ast(\eta)\oplus\cdots\oplus\rho_n^\ast(\eta)$, where $\eta$ is a (complex) line bundle over $S^2$ with 
non-zero Euler class $e(\eta)$. For each $j>i$ define a closed subset 
$K_{i,j}\subseteq X_j$ by
$$K_{i,j}:=\times_{s=1}^{n_j/n_i}K_{i,j}^{(s)},$$
where
$$K_{i,j}^{(s)}=\begin{cases}
	K, & \text{if }\pi_{i,j}^{(s)}\in E_{i,j}^{(1)},\\
	\{x_{j}\}, & \text{otherwise}.
\end{cases}$$
and $x_j\in X_i$. Let $\xi_j$ denote the vector bundle over $K_{i,j}$ corresponding to $\psi_{i,j}(a)|_{K_{i,j}}$. Then the $nN(i,j)$'th Chern class $c_{nN(i,j)}(\xi_j)$ is non-zero.
\end{lemma}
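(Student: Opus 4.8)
The plan is to track the vector bundle $\xi_j$ over the product space $K_{i,j}$ via its Euler and Chern classes, and to reduce the computation to the naturality-and-product-formula observation recorded in the Vector Bundles subsection. The key point is that $\psi_{i,j}$ is the diagonal homomorphism built from exactly the coordinate-projection eigenvalue maps in $E_{i,j}^{(1)}$, so that $\psi_{i,j}(a)|_{K_{i,j}}$ is a direct sum of pullbacks $\lambda^\ast(a|_K)$ over $\lambda\in E_{i,j}^{(1)}$ (with multiplicities $m(\lambda)$). Restricted to $K_{i,j}$, each coordinate projection $\lambda = \pi_{i,j}^{(s)}$ with $\pi_{i,j}^{(s)}\in E_{i,j}^{(1)}$ lands in a factor on which $K_{i,j}^{(s)}=K\cong (S^2)^n$, and the composite of $a|_K$ with such a $\lambda$ is itself a pullback of $\eta$ along a single sphere coordinate; on the remaining factors $\lambda$ is constant, contributing a trivial bundle.

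First I would make precise the restriction of $\psi_{i,j}(a)$ to $K_{i,j}$. Writing $\xi$ over $K\cong(S^2)^n$ as $\rho_1^\ast(\eta)\oplus\cdots\oplus\rho_n^\ast(\eta)$, for each $\lambda\in E_{i,j}^{(1)}$ the restriction of $\lambda^\ast(a|_K)$ to $K_{i,j}$ is a bundle whose Euler class is (by naturality) the pullback of $e(\xi)=\rho_1^\ast(e(\eta))\cdots\rho_n^\ast(e(\eta))$; this is exactly the non-zero top Chern class $e(\xi)=c_n(\xi)$ of the rank-$n$ bundle $\xi$, which is non-zero precisely because $e(\eta)\ne 0$ and the Künneth-type injectivity of $\mu$ applies to the product of spheres. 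Summing over the $N(i,j)$ distinct coordinate projections $\lambda\in E_{i,j}^{(1)}$ realizes $\xi_j$ as containing a pullback $\nu$ of an external sum $\rho_{\lambda,1}^\ast(\eta)\oplus\cdots$ indexed by the $N(i,j)$ distinct factors on which $\lambda$ acts nontrivially, each contributing a rank-$n$ piece with non-zero Euler class.

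Next I would compute the Euler class of this distinguished rank-$nN(i,j)$ sub-bundle $\nu$ of $\xi_j$. Using the product formula $c(\omega\oplus\eta')=c(\omega)c(\eta')$ together with naturality, the top Chern class $c_{nN(i,j)}(\nu)=e(\nu)$ equals the cup product, over the $N(i,j)$ distinct factors, of the pulled-back classes $e(\xi)$, one for each distinct coordinate projection. Since the distinct coordinate projections pick out genuinely distinct sphere factors of $X_j=X^{n_j/n_i}$ (so that their pullbacks live in cohomology of distinct tensor factors), the observation on $\mu$ and the product of spheres $(S^2)^{nN(i,j)}$ shows this cup product is non-zero. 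The point-evaluation factors contribute only trivial bundles, whose total Chern class is $1$ and whose Euler class is $0$, so they neither change $e(\nu)$ nor raise the top non-vanishing Chern class. Finally, writing $\xi_j\cong\nu\oplus\nu'$ for a complementary bundle $\nu'$ and applying the product formula once more, the degree-$2nN(i,j)$ component of $c(\xi_j)=c(\nu)c(\nu')$ has $c_{nN(i,j)}(\nu)\cdot 1$ as a summand coming from $e(\nu)\in H^{2nN(i,j)}$ paired with the degree-$0$ part of $c(\nu')$; I would need to argue that no lower-degree Chern classes of $\nu'$ can cancel it, which follows since $e(\nu)$ lies in the top-degree sphere cohomology of the relevant factors where $\nu'$ contributes trivially.

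The main obstacle I anticipate is the bookkeeping in the last step: ensuring that the non-zero class $e(\nu)$ survives as a genuinely non-zero contribution to $c_{nN(i,j)}(\xi_j)$, rather than being cancelled by cross terms $c_a(\nu)c_b(\nu')$ with $a+b=nN(i,j)$, $a<nN(i,j)$. The clean way to handle this is to arrange that $\nu$ occupies precisely those $N(i,j)$ distinct sphere-of-$(S^2)^n$ coordinate blocks and that $c_{nN(i,j)}(\nu)$ lives in the top cohomology of exactly those blocks, where $\nu'$ (a pullback from the other, constant, directions together with lower-rank remainders) contributes only the identity in positive degree; then by the injectivity of $\mu$ the corresponding monomial in $H^\ast((S^2)^{nN(i,j)})$ cannot be hit by any other product of Chern classes, so $c_{nN(i,j)}(\xi_j)\ne 0$. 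This is exactly the mechanism by which Villadsen's dimension-growth obstruction forces non-vanishing Chern classes, and the finite-dimensionality of $X$ guarantees the relevant cohomology is where we expect it.
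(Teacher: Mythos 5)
Your setup coincides with the paper's: restricted to $K_{i,j}$ the bundle decomposes as $\xi_j\cong\bigoplus_{l=1}^{N(i,j)}m(\lambda_l)\,\lambda_l^\ast(\xi)$, and Künneth injectivity gives non-vanishing of products of the classes $z_{l,s}:=\lambda_l^\ast\big(\rho_s^\ast(e(\eta))\big)$. (Incidentally, $\xi_j$ has no point-evaluation summands at all, since it comes from $\psi_{i,j}$ rather than $\fhi_{i,j}$; that part of your description is a harmless red herring.) The genuine gap is in your final step. You split $\xi_j\cong\nu\oplus\nu'$ with $\nu=\bigoplus_{l}\lambda_l^\ast(\xi)$ of rank $nN(i,j)$, and you dismiss the cross terms $c_a(\nu)c_b(\nu')$ with $a<nN(i,j)$ on the grounds that $\nu'$ ``contributes only the identity in positive degree'' on the relevant sphere blocks. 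That is false: $\nu'\cong\bigoplus_{l}\big(m(\lambda_l)-1\big)\lambda_l^\ast(\xi)$ is supported on exactly the same sphere factors as $\nu$, and whenever some multiplicity satisfies $m(\lambda_l)\geq 2$ (the typical case, and precisely the reason the paper distinguishes $\alpha(i,j)$ from $N(i,j)$) the classes $c_b(\nu')$ are non-zero in positive degree, and cross terms $c_a(\nu)c_b(\nu')$ with complementary, disjoint sets of variables do contribute non-zero multiples of the top monomial $\prod_{l,s}z_{l,s}$. So your claim that this monomial ``cannot be hit by any other product of Chern classes'' is wrong, and as written the non-cancellation step fails.

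The step can be repaired in two ways. First, by positivity: since $z_{l,s}^2=0$, every contribution of $c_a(\nu)c_b(\nu')$ in degree $2nN(i,j)$ is a \emph{non-negative} integer multiple of $\prod_{l,s}z_{l,s}$ (the term $a=nN(i,j)$, $b=0$ contributing coefficient $1$), and $\prod_{l,s}z_{l,s}\neq 0$ in the torsion-free cohomology of $(S^2)^{nN(i,j)}$, so the sum cannot vanish. Second---and this is what the paper does, avoiding the issue entirely---do not split off $\nu$: using $z_{l,s}^2=0$ one has $(1+z_{l,s})^{m(\lambda_l)}=1+m(\lambda_l)z_{l,s}$, hence $c(\xi_j)=\prod_{l,s}\big(1+m(\lambda_l)z_{l,s}\big)$; in this square-free expansion the component in degree $nN(i,j)$ is the \emph{single} monomial $\prod_{l,s}m(\lambda_l)z_{l,s}$, because any family of subsets $S_l\subseteq\{1,\dots,n\}$ with $\sum_l|S_l|=nN(i,j)$ must have every $S_l=\{1,\dots,n\}$. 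Non-vanishing is then immediate from the Künneth formula, with no cancellation question left to address.
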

\begin{proof}
Note that $K_{i,j}\cong K^{ N(i,j)}\cong (S^2)^{ nN(i,j)}$. Since $a|_K$ is a projection, it follows
from the definition of $\psi_{i,j}$, that $\psi_{i,j}(a)|_{K_{i,j}}$ is a projection. As in the statement above, let $\xi$ denote the vector bundle corresponding to $a|_K$ and $\xi_j$ the vector bundle corresponding to $\psi_{i,j}(a)|_{K_{i,j}}$. We easily deduce that
\begin{align*}
\xi_j\cong \bigoplus_{\lambda\in E_{i,j}^{(1)}}\bigoplus_{j=1}^{m(\lambda)}\lambda^*(\xi).
\end{align*}
Applying the Chern class to this equation, and using the product formula, we obtain 
\begin{align*}
c(\xi_j)&=\prod_{\lambda\in E_{i,j}^{(1)}}\prod_{j=1}^{m(\lambda)}
c(\lambda^*(\xi))=\prod_{\lambda\in E_{i,j}^{(1)}}\lambda^*\big(c(\xi)\big)^{m(\lambda)}.
\end{align*}
Write $E_{i,j}^{(1)}=\{\lambda_1,\lambda_2,\dots,\lambda_{N(i,j)}\}$. 
For $l=1,\dots,N(i,j)$ and $s=1,\dots,n$ set $z_{l,s}:=\lambda_l^*\big(\rho_s^\ast(e(\eta))\big)$. Since $e(\eta)^2=0$ (recall that $H^j(S^2)=0$ for all $j>2$), we find that $z_{l,s}^m=0$ for $l,s$ and $m>1$. By assumption, $\xi\cong\rho_1^\ast(\eta)\oplus\cdots\oplus \rho_n^\ast(\eta)$, whence
\begin{align*}
c(\xi_j)&=\prod_{l=1}^{N(i,j)}\prod_{s=1}^n (1+z_{l,s})^{m(\lambda_l)}=\prod_{l=1}^{N(i,j)}\prod_{s=1}^n (1+m(\lambda_l)z_{l,s}).
\end{align*}
Given a subset $S\subseteq \{1,\dots,n\}$ let
$z_{l,S}:= \prod_{s\in S}m(\lambda_l)z_{l,s}$ when $S\neq \emptyset$ and $z_{l,\emptyset}:= 1$ for all $1\leq l\leq N(i,j)$. It follows from the above computation that, for $1<q\leq \text{rank}(\xi_j)$, the $q$'th Chern class
$c_q(\xi_j)$ can be computed as \( \sum \prod_{l=1}^{N(i,j)}z_{l,S_l}\),
where the sum ranges over all families $\{S_l\}_l$ of subsets $S_l\subseteq \{1,\dots,n\}$ such that \(\sum_{l=1}^{N(i,j)}|S_l|=q\). Now, supposing that $\{S_l\}_l$ is a family of subsets $S_l\subseteq\{1,\dots,n\}$ such that $S_{l_0}\neq \{1,\dots,n\}$ for some $l_0$, it follows that $\sum_{l=1}^{N(i,j)}|S_l|<nN(i,j)$. In particular, we find that
\[
c_{nN(i,j)}(\xi_j)=\prod_{l=1}^{N(i,j)}z_{l,\{1,\dots,n\}}=\prod_{l=1}^{N(i,j)}\prod_{s=1}^n m(\lambda_l) z_{l,s}.
\]
It therefore follows from the Künneth formula that $c_{nN(i,j)}(\xi_j)\neq 0$.
\end{proof}

%
%

The following theorem is the main result of this section. The proof is based on the proof of \cite[Lemma 4.1]{toms.winter-vi.algs.}. However, since the statement of the following theorem is different, the proof needs to be modified, and in the interest of clarity of the exposition, we include a full proof.

\begin{theo}\label{theo-VI.main.result.}
Let $A$ be a simple Villadsen algebra of the first type 
which admits a standard decomposition
$(A_i,\fhi_i)$ with seed space a finite-dimensional $CW$-complex. 
Then $A$ has strict comparison (and hence $A\ten\JS\cong A$) if and only if $F(A)$ has no characters.
\end{theo}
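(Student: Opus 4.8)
The statement is a biconditional, and I would treat the two implications separately, with all the real work in the converse.

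For the forward direction, suppose $A$ has strict comparison. By Theorem \ref{theo-toms.winter.} this is equivalent to $A\ten\JS\cong A$ (which also justifies the parenthetical remark in the statement). Since $\JS$ is strongly self-absorbing, the isomorphism $A\ten\JS\cong A$ yields a unital ${}^*$-homomorphism $\JS\to F(A)$, as recalled in the introduction. Any character $\chi\colon F(A)\to\C$ would restrict to a character on this unital copy of $\JS$; but $\JS$ is simple, unital and infinite-dimensional, so the kernel of such a character would be a proper nonzero ideal, which is impossible. Hence $F(A)$ has no characters.

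For the converse I would argue by contraposition: assuming $A$ fails strict comparison, I would produce a character on $F(A)$. The key reduction is Proposition \ref{cor-skarp.kr.}: to exhibit a character it suffices to find a single $m\in\N$ such that for every $n\in\N$ there are $x,y_1,\dots,y_n\in\Cu(A)$ with $x\le my_i$ for all $i$ yet $x\not\le\sum_{i=1}^n y_i$, since this is exactly the negation of the conclusion of that proposition. Failure of strict comparison, via Theorem \ref{theo-toms.winter.} together with the dichotomy recorded for $(c_i)_i$ and $(d_i)_i$, places us in the ``bad'' regime in which the distinct coordinate projections persist asymptotically, i.e.\ $\lim_i c_i=1$ and in particular $N(i,j)\to\infty$; this is the source of the positive radius of comparison. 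I would then build the required classes geometrically. Fixing $i$ and, inside some $X_i$, a closed $K\cong(S^2)^n$ carrying the Villadsen bundle $\xi\cong\rho_1^*(\eta)\oplus\cdots\oplus\rho_n^*(\eta)$ with $e(\eta)\neq 0$ and $e(\eta)^2=0$ (this is where positivity of $\dim X$ enters), Lemma \ref{lem-ch.char.ob.} shows that after pushing forward through $\psi_{i,j}$ and restricting to $K_{i,j}\cong(S^2)^{nN(i,j)}$ the top Chern class $c_{nN(i,j)}(\xi_j)$ is nonzero. I would take the $y_i$ to be the classes of the $n$ coordinate line-bundle directions comprising $\xi_j$ and $x$ the class of a trivial projection of rank chosen just below the comparison threshold. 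The mechanism is nilpotence of the Euler class: each $m$-fold sum $my_i$ has vanishing top obstruction and hence, by a section-existence count, dominates $x$, whereas the untwisted sum $\sum_i y_i$ retains the nonzero top Chern class $\prod_s\rho_s^*e(\eta)$ coming from distinct directions, so it admits no trivial subbundle of the requisite rank and $x\not\le\sum_i y_i$. Letting $j\to\infty$ so that $N(i,j)\to\infty$ meets the requirement for every $n$ with a single fixed $m$.

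The main obstacle I anticipate is the quantitative bookkeeping in this last step: one must choose the rank of $x$ and partition $\xi_j$ into the $y_i$ so that the positive comparisons $x\le my_i$ genuinely hold, which requires the point-evaluation summands to supply enough rank without raising the degree of the surviving Chern class, while the obstruction to $x\le\sum_i y_i$ survives. This is precisely the quantitative comparison estimate underlying \cite[Lemma 4.1]{toms.winter-vi.algs.}, now recast in the weak divisibility language of Proposition \ref{cor-skarp.kr.} rather than their $\JS$-stability formulation; making the ratios $N(i,j)/M(i,j)$ and $\alpha(i,j)/M(i,j)$ interact correctly with the fixed $m$ is the delicate point. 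A secondary technical issue is reducing a general positive-dimensional finite $CW$ seed space to one supplying sphere-products with nonzero Euler classes, which I would handle as in \cite{toms.winter-vi.algs.}.
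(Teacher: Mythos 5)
Your proposal is correct and follows essentially the same route as the paper's proof: the forward direction via Theorem \ref{theo-toms.winter.} plus the fact that $\JS$ admits no characters, and the converse by contraposition through Proposition \ref{cor-skarp.kr.} with a single fixed $m$ (the paper takes $m=3$, $x$ the class of a rank-two trivial bundle cut down by a bump function, and $y_l$ the classes of the pulled-back line bundles $\rho_l^\ast(\eta)$), the non-comparison $x\not\leq \sum_l y_l$ being preserved in the limit exactly by the Chern class obstruction of Lemma \ref{lem-ch.char.ob.} played against the ratio bound $N(i,j)/M(i,j)\geq (2n-1)/(2n)$. One small correction to your ``main obstacle'' paragraph: the point-evaluation summands are not needed to make the positive comparisons $x\leq my_l$ hold (these already hold in $A_i\ten\K$ and push forward functorially); rather, their trivial summands $\theta_{nr}$ are the threat to the \emph{negative} comparison, and the bookkeeping consists in showing that the forced rank bound $\mathrm{rank}(\zeta)\geq nN(i,j)$ defeats the available rank $(n-2)\alpha(i,j)+nr$.
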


\begin{proof}
Assume $A$ has strict comparison. Then it follows from Theorem \ref{theo-toms.winter.} that $A\ten\JS\cong A$, whence there exists a unital embedding $\JS\to F(A)$. Since $\JS$ has no characters it follows that $F(A)$ does not admit a character either. We show, using Proposition \ref{cor-skarp.kr.}, that $F(A)$ has at least one character if $A$ does not have strict comparison.

Fix $n\geq 2$. Since $A$ does not have strict comparison it follows from \cite[Lemma 5.1]{toms.winter-vi.algs.} that 
\begin{align}
\lim_{i\to\infty}\lim_{j\to\infty}\frac{N(i,j)}{M(i,j)}=1.\label{ratio-assumption}
\end{align}
Note that since $\text{dim}(X)>0$ and $A$ is simple, the number of point evaluations occurring as eigenvalue maps
in $\fhi_{i,j}$ is unbounded as $j\to\infty$ for any $i\in\N$. In particular, $M(i,j)\to\infty$ as $j\to\infty$, whence \eqref{ratio-assumption} implies $\text{dim}(X_i)\to\infty$ as
$i\to\infty$. Hence, we may choose $i\in\N$ such that 
$\text{dim}(X_i)\geq 3n$ and
\begin{align}
\frac{N(i,j)}{M(i,j)}\geq\frac{2n-1}{2n},\quad \text{for all }j>i.\label{eq-ratio.assumption.1.}
\end{align}
Choose an open subset $O\subseteq X_i$ such that $O\cong(-1,1)^{\text{dim}(X_i)}=:D$. Let 
$$\overline Y:=\{x\in(-1,1)^3\mid \text{dist}\big(x,(0,0,0)\big)=1/2\}$$
and
$$\overline Z:=\{x\in (-1,1)^3\mid 1/3\leq\text{dist}\big(x,(0,0,0)\big)\leq2/3\}.$$
Furthermore, define closed subsets
$$K:=\overline Y^{\times n}\times \{0\}^{\text{dim}(X_i)-3n}\subseteq D$$
and
$$Z:=\overline Z^{\times n}\times [-4/5,4/5]^{\text{dim}(X_i)-3n}\subseteq D.$$
Let $Z_0$ denote the interior of $Z$ and note that $K\subseteq Z_0$. 
We identify $K$ and $Z$ with their homeomorphic images in $X_i$ and note that
$K\cong (S^2)^n$. For each $l=1,\dots, n$, let $\rho_l:(S^2)^n\to S^2$ denote
the $l$'th coordinate projection. Choose some line bundle $\eta$ over $S^2$
with non-zero Euler class $e(\eta)$ (for
instance the Hopf bundle), and set $\eta_l:=\rho_l^*(\eta)$.
We consider each $\eta_l$ to be a
vector bundle over $K$. Furthermore, let $\theta_2$ denote the trivial
vector bundle of fibre dimension $2$ over $K$. It follows from
\cite[Proposition 9.1.2]{husemoller} that $\theta_2\precsim \eta_l\oplus\eta_l\oplus \eta_l$, for each $l=1,\dots,n$, while 
\( \theta_2\not\precsim \bigoplus_{l=1}^n\eta_l\), since the Euler class of the right hand vector bundle is non-zero. We
aim to construct positive elements in $A$ such that the above relationships
between vector bundles persist in $\Cu(A)$.

Let $\mathrm{pr}\colon\overline Z\to \overline Y$ be the projection along
rays emanating from the origin and let $f\colon X_i\to \C$ be a continuous map
satisfying $f|_K\equiv 1$ and $f|_{X_i\backslash Z_0}\equiv 0$. Let $P\colon Z\to K$ 
be given by
\[
P=\underbrace{\mathrm{pr}\times\cdots\times \mathrm{pr}}_{n\text{
times}}\times\underbrace{\text{ev}_0\times\cdots\times\text{ev}_0}_{\text{dim}(X_i)-3n\text{ times}},
\]
where $\text{ev}_0(z)=0$ for any $z\in (-1,1)$. For each $l=1,\dots,n$, let $p_l\in C(Z,\K)$ denote the projection corresponding to $P^\ast(\eta_l)$ and let $p'\in C(Z,\K)$ denote the projection corresponding to $P^\ast(\theta_2)$. Define elements 
$b_l,a\in A_i$, for $l=1,\dots,n$, by $b_l:= f\cdot p_l$ and $a:= f\cdot p'$. Since $f\in A_i$ is central, and $p'\precsim p_l\oplus p_l\oplus p_l$ for each $l=1,\dots,n$, it easily follows that $ a\precsim b_l\oplus b_l\oplus b_l$, for each $l=1,\dots,n$. Let
\begin{align*}
x:=\langle\fhi_{i,\infty}( a)\rangle \in \Cu(A),\quad
y_l:=\langle\fhi_{i,\infty}( b_l)\rangle\in \Cu(A),\text{ for } l=1,\dots,n.
\end{align*}
Clearly  $x\leq 3y_l$ for $l=1,\dots,n$. To finish the proof we need to show $x\not\leq y_1+y_2+\cdots+y_n$, and then Proposition \ref{cor-skarp.kr.} (with $m=3$) will yield the desired result.

Letting $a$ be given as above and $b=\bigoplus_{l=1}^n b_l\in (A_i\ten\K)_+$, we aim to show that $\fhi_{i,\infty}(a)\not\precsim \fhi_{i,\infty}(b)$ in $A\ten\K$. 
It suffices to prove that
$$\norm v^*\fhi_{i,j}(b)v-\fhi_{i,j}(a)\norm\geq\frac 12,$$
for each  $j>i$ and $v\in A_j\ten\K$. Note that $\chi_{i,j}(b)$ is a constant, positive, matrix valued function, whence
$q:=\lim_{n\to\infty}\chi_{i,j}(b)^{1/n}\in A_j\ten \K$ is a constant projection such that $\chi_{i,j}(b)q=\chi_{i,j}(b)$. Setting $Q:=\psi_{i,j}(\mathbf 1)\oplus\chi_{i,j}(b)^{1/2}$, we have
\begin{align}
\fhi_{i,j}(b)=\psi_{i,j}(b)\oplus\chi_{i,j}(b)=Q(\psi_{i,j}(b)\oplus q)Q.\label{eq-rewriting.}
\end{align}
Now, let $j>i$ be given and suppose for a contradiction, that there exists $v\in A_j\ten\K$ such that $\norm v^*\fhi_{i,j}(b)v-\fhi_{i,j}(a)\norm<1/2$.
Then, setting $w:=Qv\psi_{i,j}(\mathbf 1_{A_i})$, it follows from \eqref{eq-rewriting.} that
\begin{align}
\frac 12>\norm v^*Q(\psi_{i,j}(b)\oplus q)Qv-\fhi_{i,j}(a)\norm\geq \norm w^*(\psi_{i,j}(b)\oplus q)w-\psi_{i,j}(a)\norm.\label{essen-modstr}
\end{align}
This estimate remains valid upon restriction to any closed subset of $X_j$.

Let $\xi$ denote the vector bundle over K corresponding to
$b|_K$. Plug $A$, $X$, $X_i$, $b$, $K$ and $\xi$ into Lemma \ref{lem-ch.char.ob.} to get $K_{i,j}\subseteq X_j$ and $\xi_j$. Note that $b|_K=(b_1|_K)\oplus\cdots\oplus (b_n|_K)$, whence $\xi\cong \rho_1^\ast(\eta)\oplus\cdots\oplus \rho_n^\ast(\eta)$, and therefore the hypothesis of Lemma \ref{lem-ch.char.ob.} are satisfied. It is easily deduced that $q|_{K_{i,j}}$
corresponds to a trivial vector bundle $\theta_{nr}$, where $0\leq r\leq M(i,j)-\alpha(i,j)$, and since $a|_K\in C(K)\ten\K$ is a constant projection valued function of rank $2$ it follows that $\psi_{i,j}(a)|_{K_{i,j}}$ corresponds to the trivial vector bundle $\theta_{2\alpha(i,j)}$.
It therefore follows from \eqref{essen-modstr} and \cite[Lemma 2.1]{toms-the.classification.problem.} that
there exists a vector bundle $\zeta$ of fibre dimension $(n-2)\alpha(i,j)+nr$ and $t\in\N$ such that
\[
\zeta\oplus\theta_{2\alpha(i,j)+t}\cong \xi_j\oplus\theta_{nr+t}.
\]
Applying the Chern class to both sides of the above expression, we obtain
that \( c(\zeta)=c(\xi_j) \).
In particular, $c_{nN(i,j)}(\zeta)=c_{nN(i,j)}(\xi_j)$, whence Lemma \ref{lem-ch.char.ob.} implies that $c_{nN(i,j)}(\zeta)$ is non-zero. Hence $\text{rank}(\zeta)\geq nN(i,j)$, and therefore
\begin{align*}
nN(i,j)&\leq (n-2)\alpha(i,j)+nr\\
&\leq (n-2)\alpha(i,j)+n(M(i,j)-\alpha(i,j))\\
&\leq nM(i,j)-2N(i,j).
\end{align*}
Thus, dividing both sides by $nM(i,j)$ we obtain
$$\frac{N(i,j)}{M(i,j)}\leq 1-\frac 2n\cdot \frac{N(i,j)}{M(i,j)}.$$
Hence \eqref{eq-ratio.assumption.1.} implies
\begin{align*}
\frac{2n-1}{2n}\leq 1-\frac{2(2n-1)}{n(2n)}=\Big(\frac{n-1}n\Big)^2<\frac{n-1}{n},
\end{align*}
which is the desired contradiction.
\end{proof}

Before considering Villadsen algebras of the second type let us record the following proposition, which is an aggregation of results by other authors. However, it does serve to illustrate the added complexity of Villadsen algebras of the second type (compare with Theorem \ref{theo-villadsens.resultater.}), which are less studied than those of the first type.
\begin{prop}\label{prop-real.rank.and.unique.trace.}
Suppose $A$ is a simple Villadsen algebra which admits a standard decomposition with seed space a finite dimensional $CW$-complex of non-zero dimension. Then $A$ has real rank zero if and only if $A$ has a unique tracial state.
Furthermore, in this case, $A\ten\mathcal Z\cong A$.
\end{prop}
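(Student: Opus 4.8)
The plan is to prove the two implications separately and to read off $A\ten\JS\cong A$ along the way. Throughout I assume, as is the case in all the standard constructions, that the seed space $X$ is connected, so that each $X_i=X^{n_i}$ is connected as well.

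For the forward direction (unique trace $\Rightarrow$ real rank zero) I would use that a Villadsen algebra of the first type is a simple, unital AH algebra whose connecting maps are diagonal, every eigenvalue map being a coordinate projection or a point evaluation. Hence Niu's theorem \cite{niu-mean.dimension.} applies: a unique tracial state means the extremal trace space is a single point, hence countable, so $A$ has slow dimension growth. By Theorem \ref{theo-toms.winter.} this is equivalent to strict comparison and to $\mathcal Z$-stability, which already yields $A\ten\JS\cong A$ and settles the final clause of the proposition. To upgrade to real rank zero I would invoke the characterization of Blackadar, Dadarlat and R\o rdam \cite{bdr-real.rank.of.inductive.limits.}: a simple unital AH algebra of slow dimension growth has real rank zero precisely when $\rho(K_0(A))$ is uniformly dense in $\mathrm{Aff}(T(A))$. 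With a unique trace $\mathrm{Aff}(T(A))=\R$, and since the high-stage building blocks $M_{m_i}\ten C(X_i)$ carry trivial line subbundles, $A$ has projections of trace $1/m_i\to 0$; thus $\rho(K_0(A))$ is dense in $\R$ and real rank zero follows.

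For the backward direction (real rank zero $\Rightarrow$ unique trace) the key observation is a rank count. Because each $X_i$ is connected, every projection in $M_{m_i}\ten C(X_i)\ten\K$ corresponds to a vector bundle of constant fibre dimension $r$, so for \emph{every} tracial state $\tau$ one has $\tau(p)=r/m_i$, a value independent of $\tau$. As every projection of $A$ is Murray--von Neumann equivalent to one arising from some stage $A_i$, it follows that all tracial states of $A$ agree on all projections of $A$. On the other hand, real rank zero forces the projections to \emph{separate} the traces: self-adjoint elements with finite spectrum are real-linear combinations of mutually orthogonal projections and are dense in $A_{sa}$, so two traces that agree on all projections agree on a dense subset of $A_{sa}$ and hence coincide. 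Combining the two facts gives $|T(A)|=1$.

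The main obstacle I anticipate lies in the hypotheses of the external results rather than in the core argument. In the forward direction one must verify that Villadsen maps qualify as \emph{diagonal} in the sense of Niu and, more substantively, that the density hypothesis of the Blackadar--Dadarlat--R\o rdam criterion genuinely holds; this is precisely where the existence of small-trace projections, and hence the non-zero dimension of the seed, is used. The connectedness reduction in the backward direction is the other delicate point: for a disconnected seed two traces can differ only through the masses they assign to the components of the $X_i$, so one must either argue that simplicity together with a non-zero-dimensional seed already forces the relevant spaces to be connected, or treat that case separately.
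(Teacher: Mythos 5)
Your forward direction (unique trace $\Rightarrow$ real rank zero, with $\JS$-stability read off along the way) is essentially the paper's own argument: Niu's theorem \cite{niu-mean.dimension.} gives slow dimension growth and the criterion of Blackadar, D\u{a}d\u{a}rlat and R{\o}rdam \cite{bdr-real.rank.of.inductive.limits.} then gives real rank zero; with a unique trace the hypothesis of that criterion is nearly vacuous, and your small-projection density check is a legitimate way to verify it. The backward direction (real rank zero $\Rightarrow$ unique trace) is where you genuinely depart from the paper --- the paper instead adapts \cite[Proposition 7.1]{toms.winter-vi.algs.} to show that real rank zero forces $\lim_j \alpha(i,j)/M(i,j)=0$ for every $i$, and deduces uniqueness of the trace from that --- and it is also where the gaps lie.

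Your rank count needs every projection of $A$ to be equivalent to a projection living in a matrix algebra over a \emph{connected} space, and neither the hypotheses of the proposition nor simplicity provide this. First, Definition \ref{def-villadsen.algebra.} allows a disconnected seed such as $X=S^2\sqcup S^2$, and simplicity does not repair this: in the Villadsen construction simplicity is arranged through the point evaluations, which can be chosen with dense orbits over a disconnected seed just as well as over a connected one. So of the two fixes you propose, the first (simplicity forces connectedness) is false, and the second (treat the disconnected case separately) is never carried out. Second, even when $X$ is connected, the standard approximation argument only shows that a projection of $A$ is equivalent to a projection of some image subalgebra $\fhi_{i,\infty}(A_i)\cong M_{m_i}\ten C(Y_i)$, where $Y_i\subseteq X_i$ is the closed subset determined by $\ker\fhi_{i,\infty}=\{f\in A_i\mid f|_{Y_i}=0\}$; it does \emph{not} produce the image of a projection of $A_i$ itself. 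Nothing in the definition forces $\fhi_{i,\infty}$ to be injective: all eigenvalue maps of $\fhi_{i,j}$ may be point evaluations from some $j$ on, in which case $Y_i$ is a finite set, $\fhi_{i,\infty}(A_i)$ is a direct sum of several matrix algebras, and its projections have component-dependent ranks which cannot be matched by any projection over the connected space $X_i$. This is not a vacuous scenario: the CAR algebra arises in exactly this way as a simple $\V$I algebra with seed $S^2$ and $\fhi_{i,\infty}(A_i)\cong M_{2^i}\oplus M_{2^i}$. In that situation, proving that all traces agree on such a projection amounts to proving that the masses any trace assigns to the points of $Y_i$ are forced by the multiplicities --- which is precisely the uniqueness statement you are trying to establish, so the argument becomes circular. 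The paper's computation with $\alpha(i,j)/M(i,j)$ avoids both problems, since it identifies the measure defining any trace on $A_i$ as a weak${}^\ast$ limit of trace-independent averages of point masses, using neither connectedness nor injectivity. Your argument does become a correct, and genuinely more elementary, alternative under the additional assumptions that $X$ is connected and that $E^{(1)}_{i,j}\neq\emptyset$ for all $j>i$ (so that each $\fhi_{i,\infty}$ is isometric), but those assumptions are not part of the statement being proved.
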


\begin{proof}
The proof that real rank zero implies unique tracial state is essentially contained in \cite[Proposition 7.1]{toms.winter-vi.algs.}. Indeed, replacing every instance of $N(i,j)$ in the cited proof with $\alpha(i,j)$, it follows that if $RR(A)=0$, then \( \lim_{j\to\infty}\frac{\alpha(i,j)}{M(i,j)}=0\) for all $i\in\N$. It is easy to check that this implies that $A$ has a unique tracial state. Furthermore, the statement that $A$ is $\JS$ stable follows from \cite[Proposition 7.1]{toms.winter-vi.algs.} and  a series of results summarized in \cite[Theorem 3.4]{toms.winter-vi.algs.}.

On the other hand, assuming $A$ has a unique tracial state, it follows from \cite[Theorem 1.1]{niu-mean.dimension.} that $A$ has slow dimension growth. There is a simpler proof for $\V$I algebras, which we omit to keep the exposition at a reasonable length. Therefore, \cite[Theorem 2]{bdr-real.rank.of.inductive.limits.} implies that $A$ has real rank zero.
\end{proof}

%
%
%
%
%

\section{Villadsen Algebras of the second type}\label{sec-the.second.type.}
In this section we study the Villadsen algebras of the second type. We prove that for each Villadsen algebra $A$ of the second type, $F(A)$ has at least one character. For the convenience of the reader we recall the construction from \cite{villadsen-sr.}
\begin{mindef}
Let $X,Y$ be compact Hausdorff spaces. A ${}^\ast$-homomorphism  $\fhi\colon C(X)\ten\K\to C(Y)\ten\K$ is said to be a \textbf{diagonal map of the second type} if there exists $k\in\N$, continuous maps $\lambda_1,\dots,\lambda_k\colon Y\to X$, and mutually orthogonal projections $p_1,\dots,p_k\in C(Y)\ten\K$ such that 
\[
\fhi=(\text{id}_{C(Y)}\ten \alpha)\circ(\tilde\fhi\ten \text{id}_{\K}),
\]
where $\alpha\colon \K\ten\K\to\K$ is some isomorphism and $\tilde\fhi\colon C(X)\to C(Y)\ten\K$ is given by
\[
\tilde\fhi(f)=\sum_{i=1}^k(f\circ\lambda_i)p_i.
\]
In this case, we say $\fhi$ arises from the tuple $(\lambda_i,p_i)_{i=1}^k$, and the maps $\lambda_i$, $i=1,\dots,k$, are referred to as the \textbf{eigenvalue maps} of $\fhi$.
\end{mindef}
Note that in the above definition we have implicitly used that the $C^\ast$-algebra $C(X)\ten\K$ has a natural $C(X)$-module structure. Since all diagonal maps appearing from this point on will be of the second type defined above, we simply refer to them as diagonal maps.

For each $l\in\N$, let $\C P^l$ denote the $l$'th complex projective space,
let $\gamma_l$ denote the universal line bundle over $\C P^l$, and let $\D^l$ denote the $l$-fold cartesian product of the unit disc $\D\subseteq \C$. It is well-known that the $l$-fold cup product $e(\gamma_l)^l$ of the Euler class $e(\gamma_l)$ is non-zero for all $l\in\N$. For each integer $n\geq 1$, let $\sigma(n):=n(n!)$ and $\sigma(0):=1$. Furthermore, let 
$\N_\infty=\N\cup\{\infty\}$ and let $\kappa:\N_\infty\times\N\to\N$ be given by
\[
\kappa(k,n)=\begin{cases}
k\sigma(n), & \text{if }k<\infty,\\
n\sigma(n), & \text{if }k=\infty.\\
\end{cases}
\]
For all integers $k\geq 1$ and $n\geq 0$, define compact Hausdorff spaces
$X^{(k)}_n$ by $X^{(k)}_0:=\D^k$ and
\[
X^{(k)}_n:=\D^k\times \C P^{\kappa(k,1)}\times \C P^{\kappa(k,2)}
\times\cdots\times \C P^{\kappa(k,n)},
\]
when $n\geq 1$. Also, for $k=\infty$, we set $X^{(k)}_0:=\D$ and
\[
X^{(k)}_n:=\D^{n\sigma(n)^2}\times \C P^{\kappa(k,1)}\times \C P^{\kappa(k,2)}
\times\cdots\times \C P^{\kappa(k,n)}.
\]
Thus \( X^{(k)}_n=X^{(k)}_{n-1}\times \C P^{k\sigma(n)} \), whenever $k<\infty$ and $n\geq 1$, and
\begin{align*}
X^{(\infty)}_1&:=X^{(\infty)}_0\times\C P^{1};\\
X^{(\infty)}_n&:=\D^{n\sigma(n)^2-(n-1)\sigma(n-1)^2}\times X^{(\infty)}_{n-1}\times \C
P^{n\sigma(n)},\quad n\geq 2.
\end{align*}
For each $k\in\N_\infty$ and $n\in\N$, let
\begin{align*}
\pi_{k,n}^1:X^{(k)}_n\to X^{(k)}_{n-1},\qquad  \pi_{k,n}^2:X^{(k)}_n\to \C P^{\kappa(k,n)},
\end{align*}
denote the coordinate projections, and set
$\zeta^{(k)}_n:=\pi_{k,n}^{2*}(\gamma_{\kappa(k,n)})$. If $y_0\in X^{(k)}_n$ is a point, we also let $y_0$ denote the constant map $f\colon X^{(k)}_{n+1}\to X^{(k)}_n$ with $f(x)=y_0$ for all $x\in X^{(k)}_{n+1}$.

For each
$k\in\N_\infty$ and integer $n\geq 0$, let $\tilde\fhi^{(k)}_n\colon C(X^{(k)}_n)\ten\K\to
C(X^{(k)}_{n+1})\ten\K$ be the diagonal map arising from the tuple
$(\pi_{k,n+1}^1,\theta_1)\cup(y^{(k)}_{n,j},\zeta^{(k)}_{n+1})_{j=1}^{n+1}$,
where the points $\{y^{(k)}_{n,j}\}_{j=1}^{n+1}\subseteq X^{(k)}_n$ are chosen such that the resulting $C^*$-algebra is
simple (see \cite{villadsen-sr.} for more details) and $\theta_1$ denotes the trivial line bundle. Let $p_0^{(k)}\in C(X^{(k)}_0)\ten\K$ denote a constant projection of rank $1$ and $p^{(k)}_n:=\tilde\fhi^{(k)}_{n,0}(p^{(k)}_0)$.
Furthermore, let
$$A^{(k)}_n:=p^{(k)}_n\big(C(X^{(k)}_n)\ten\K\big)p^{(k)}_n,$$
and $\fhi^{(k)}_n:=\tilde\fhi^{(k)}_n|_{A^{(k)}_n}$. Define $\V_k$ to
be the inductive limit of the system \( (A^{(k)}_n,\fhi^{(k)}_n) \).
The following results about the $C^\ast$-algebras $\mathcal V_k$ may be found in \cite{villadsen-sr.}.

\begin{theo}[Villadsen]\label{theo-villadsens.resultater.}
For each $k\in\N_\infty$, let $\V_k$ be defined as above.
\begin{enumerate}
\item The $C^\ast$-algebra $\V_k$ has a unique tracial state $\tau$, for each $k\in\N_\infty$.
\item The stable rank $\mathrm{sr}(\V_k)$ of $\V_k$ is $k+1$, when $k<\infty$, and infinite, when $k=\infty$.
\item The real rank $\mathrm{RR}(\V_k)$ of $\V_k$ satisfies $k\leq \mathrm{RR}(\V_k)\leq k+1$, when $k<\infty$, and is infinite, when $k=\infty$.
\end{enumerate}
\end{theo}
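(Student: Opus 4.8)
The plan is to handle the three assertions separately, in each case pushing the question down to the homogeneous building blocks $A^{(k)}_n=p^{(k)}_n\big(C(X^{(k)}_n)\ten\K\big)p^{(k)}_n$ and to the way the diagonal maps $\fhi^{(k)}_n$ act. First I would record the one computation that drives everything: since $\tilde\fhi^{(k)}_n$ arises from the $n+2$ eigenvalue maps $(\pi_{k,n+1}^1,\theta_1)$ and $(y^{(k)}_{n,j},\zeta^{(k)}_{n+1})_{j=1}^{n+1}$, each carrying a line bundle, the rank $r_n:=\text{rank}(p^{(k)}_n)$ satisfies $r_{n+1}=(n+2)r_n$, so $r_n=(n+1)!$.

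For (i) I would identify a tracial state on $A^{(k)}_n$ with a Borel probability measure on $X^{(k)}_n$, a trace being integration of the pointwise normalised matrix trace. Tracing the definition of $\fhi^{(k)}_n$ through, a trace on $A^{(k)}_{n+1}$ with measure $\mu_{n+1}$ restricts along $\fhi^{(k)}_n$ to the trace with measure
\[
\mu_n=\tfrac{1}{n+2}\,(\pi_{k,n+1}^1)_\ast\mu_{n+1}+\tfrac{1}{n+2}\sum_{j=1}^{n+1}\delta_{y^{(k)}_{n,j}},
\]
the point-evaluation maps contributing fixed point masses. Iterating this identity writes $\mu_i$ as a convergent sum of pushforwards of the fixed masses $\delta_{y^{(k)}_{m,j}}$, $m\ge i$, the remaining genuine-measure term carrying coefficient $\prod_{l=i}^{N-1}(l+2)^{-1}=\tfrac{(i+1)!}{(N+1)!}\to 0$. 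Hence $\mu_i$ is determined independently of the trace on $\V_k$ one started from, which gives uniqueness; I expect this to be the easiest part.

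For the upper bounds in (ii) and (iii) I would compute dimensions. Using $\sigma(m)=m\cdot m!=(m+1)!-m!$ one gets, for $k<\infty$,
\[
\dim_\R X^{(k)}_n=2k+2k\sum_{m=1}^{n}\sigma(m)=2k\,(n+1)!=2k\,r_n,
\]
so $\dim_\R X^{(k)}_n/(2r_n)=k$ for every $n$. Rieffel's dimension formula for the stable rank of a homogeneous $C^\ast$-algebra then gives $\mathrm{sr}(A^{(k)}_n)=k+1$, and a parallel count gives $\mathrm{RR}(A^{(k)}_n)\le k+1$; upper semicontinuity of both ranks under inductive limits (the inequality $\mathrm{sr}(\varinjlim A_n)\le\liminf_n\mathrm{sr}(A_n)$ and its real-rank analogue) yields $\mathrm{sr}(\V_k)\le k+1$ and $\mathrm{RR}(\V_k)\le k+1$. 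For $k=\infty$ the disc factor has dimension $2n\sigma(n)^2$, so $\dim_\R X^{(\infty)}_n/(2r_n)\to\infty$, which together with the lower bound below forces both ranks to be infinite.

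The hard part is the lower bounds $\mathrm{sr}(\V_k)\ge k+1$ and $\mathrm{RR}(\V_k)\ge k$, since stable and real rank may drop in the limit, so the building-block values do not suffice. Here I would invoke the characteristic-class machinery of Section 2.2. The connecting maps deposit pullbacks of the universal line bundles $\zeta^{(k)}_n=\pi_{k,n}^{2\ast}(\gamma_{\kappa(k,n)})$ into the bundle $\xi_{p^{(k)}_n}$; because $e(\gamma_l)^l\ne 0$ in $H^\ast(\C P^l)$ and the relevant spaces are products of projective spaces with torsion-free cohomology, the Künneth/product-formula argument of Section 2.2 produces a nonvanishing high Chern class, preserved by the coordinate-projection part of each connecting map exactly as in Lemma \ref{lem-ch.char.ob.}. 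A nonzero top Chern class prevents $\xi_{p^{(k)}_n}$ from splitting off the number of trivial summands that $\mathrm{sr}(\V_k)\le k$, respectively $\mathrm{RR}(\V_k)\le k-1$, would force through the standard topological stable- and real-rank criteria. The genuine obstacle is to show this obstruction survives the passage to the limit rather than merely holding at each finite stage, and this is precisely where Villadsen's tuning of the exponents $\kappa(k,n)$ against the multiplicities $(n+1)!$ is essential; the same mechanism, now unbounded, handles $k=\infty$.
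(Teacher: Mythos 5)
This theorem is not proved in the paper at all: it is stated as a quotation of results from \cite{villadsen-sr.} ("The following results about the $C^\ast$-algebras $\mathcal V_k$ may be found in \cite{villadsen-sr.}"), so your proposal has to be measured against Villadsen's original argument. Against that standard, your part (i) is essentially the right proof and is complete in outline: since every eigenvalue map of $\tilde\fhi^{(k)}_n$ carries a line bundle, each contributes weight $1/(n+2)$, your restriction formula for measures is correct, and iterating it shrinks the non-atomic part at rate $(i+1)!/(N+1)!\to 0$, which pins down the restriction of any trace to every $A^{(k)}_i$. Your upper bounds in (ii) and (iii) are also sound: $\dim X^{(k)}_n=2k\cdot\rank(p^{(k)}_n)$ exactly, so the dimension formulas for homogeneous $C^\ast$-algebras together with $\mathrm{sr}(\varinjlim A_n)\le\liminf_n\mathrm{sr}(A_n)$ and its real-rank analogue give $\mathrm{sr}(\V_k)\le k+1$ and $\mathrm{RR}(\V_k)\le k+1$.

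The genuine gap is the lower bounds, which are the actual content of (ii) and (iii); you flag them as "the hard part" but what you offer in their place is both incomplete and, as described, mechanically wrong. Stable rank $\le k$ does not "force trivial summands to split off $\xi_{p^{(k)}_n}$"; it means that $Lg_k(\V_k)$, the set of left-invertible $k$-tuples, is dense in $\V_k^{\,k}$, and to refute it one must produce a concrete $k$-tuple that cannot be approximated by left-invertible tuples. Villadsen's witness is the tuple $(z_1p_0^{(k)},\dots,z_kp_0^{(k)})$ of complex coordinate functions on the seed factor $\D^k$ --- this is the entire reason the polydisc $\D^k$ (and, when $k=\infty$, polydiscs of growing dimension) is built into $X^{(k)}_n$, and your sketch never mentions it. The passage to the limit is then the \emph{easy}, standard step (approximability in the limit forces approximability at some finite stage, since left invertibility is an open condition and $\bigcup_n\fhi^{(k)}_{n,\infty}(A^{(k)}_n)$ is dense); what Villadsen's tuning $\kappa(k,n)=k\sigma(n)$ buys is not this passage but the uniformity of the finite-stage obstruction: it keeps the dimension-to-rank ratio of $X^{(k)}_n$ pinned at $2k$, so that a left-invertible $k$-tuple near $\fhi^{(k)}_{0,n}(z_ip_0^{(k)})$ would yield a fiberwise-injective bundle map whose existence is contradicted, at every stage $n$, by the nonvanishing Euler class of $\sigma(1)\zeta^{(k)}_1\oplus\cdots\oplus\sigma(n)\zeta^{(k)}_n$ (the Künneth argument you correctly point to). Finally, the real-rank lower bound $k\le\mathrm{RR}(\V_k)$ is not a formal consequence of the stable-rank statement --- the only general inequality, $\mathrm{RR}(A)\le 2\,\mathrm{sr}(A)-1$, goes the wrong way --- so it requires a separate, parallel argument with self-adjoint tuples (built from the real coordinates of $\D^k$), which your proposal does not address at all. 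As it stands, (i) and the upper bounds are fine, but the core of Villadsen's theorem is missing.
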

It is easy to check that, if $\eta$ is an arbitrary vector bundle over $X_i^{(k)}$, then
\begin{align}
\big(\fhi_i^{(k)}\big)^\ast(\eta)\cong \pi_{k,i+1}^{1\ast}(\eta)\oplus (i+1)\text{rank}(\eta)\zeta^{(k)}_{i+1},\label{eq-con.maps.}
\end{align}
where $(\fhi^{(k)}_i)^\ast$ denotes the map from (isomorphism classes of) vector bundles over $X_i^{(k)}$ to (isomorphism classes of) vector bundles over $X_{i+1}^{(k)}$ induced by $\fhi^{(k)}_i$. For each $k,n\in\N$ let $\xi^{(k)}_i$ denote the vector bundle over $X^{(k)}_i$ corresponding to $p^{(k)}_i$. Then \eqref{eq-con.maps.} implies that
\begin{align}
\xi_i^{(k)}\cong \theta_1\times \sigma(1)\gamma_{\kappa(k,1)}\times\cdots\times\sigma(i)\gamma_{\kappa(k,i)}.\label{eq-unit.for.k.}
\end{align}

A brief word on notation: as before, for each $i<j$ and $k\in\N_\infty$, we let 
$\fhi^{(k)}_{i,j}\colon A^{(k)}_i\to A^{(k)}_j$ and $\fhi^{(k)}_{i,\infty}\colon A^{(k)}_i\to \V_k$ denote the induced maps from the inductive limit decomposition. We will often omit the superscript $(k)$ in the following (whenever $k$ is implied by the context).

%
%
\begin{prop}\label{prop-comparability}
Let $k\in\N_\infty$ be given. For each $n\in\N$ there exist projections 
$e_n, q^{(n)}_1,\dots,q^{(n)}_n\in \V_k\ten \K$ such that
\begin{enumerate}
\item $e_n\precsim q^{(n)}_i\oplus q^{(n)}_i$, for all $i=1,\dots,n$.
\item $e_n\not\precsim q^{(n)}_1\oplus\cdots\oplus q^{(n)}_n$.
\item $\displaystyle{ \tau(q_1^{(n)}\oplus q^{(n)}_2\oplus\dots\oplus q^{(n)}_n)\to k}$ and $\tau(e_n)\to 0$ as $n\to\infty$.
\end{enumerate}
\end{prop}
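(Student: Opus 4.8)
The plan is to build the required projections at the finite stage $A_n^{(k)}\ten\K$ (that is, to take the index $N=n$) and then push them into $\V_k\ten\K$ via $\fhi^{(k)}_{n,\infty}$. Recalling from \eqref{eq-unit.for.k.} that $\rank(\xi_n^{(k)})=(n+1)!$, I would set $e_n:=\fhi^{(k)}_{n,\infty}(\bar e_n)$ and $q_i^{(n)}:=\fhi^{(k)}_{n,\infty}(\bar q_i)$ for $i=1,\dots,n$, where $\bar e_n,\bar q_i\in A_n^{(k)}\ten\K$ are the projections corresponding, under Swan's theorem, to the vector bundles $\theta_1$ (the trivial line bundle) and $\kappa(k,i)\zeta_i^{(k)}$ (the sum of $\kappa(k,i)$ copies of $\zeta_i^{(k)}$) over $X_n^{(k)}$, respectively. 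Writing $u_i:=e(\zeta_i^{(k)})\in H^2(X_n^{(k)})$, the essential feature is that $u_i^{\kappa(k,i)}\neq 0$ while $u_i^{\kappa(k,i)+1}=0$, so that $\bigoplus_{i=1}^n\bar q_i$ has top Chern (Euler) class $\prod_{i=1}^n u_i^{\kappa(k,i)}$, which is non-zero by the Künneth formula.

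Conditions (i) and (iii) are then comparatively direct. For (i) the bundle $\kappa(k,i)\zeta_i^{(k)}\oplus\kappa(k,i)\zeta_i^{(k)}$ is pulled back from $\C P^{\kappa(k,i)}$ and has rank $2\kappa(k,i)$ exceeding the complex dimension $\kappa(k,i)$ of that space; a dimension argument (cf.\ \cite[Proposition 9.1.2]{husemoller}) splits off a trivial summand of rank $\kappa(k,i)\geq 1$, giving $\theta_1\precsim \kappa(k,i)\zeta_i^{(k)}\oplus\kappa(k,i)\zeta_i^{(k)}$ over $X_n^{(k)}$ and hence $e_n\precsim q_i^{(n)}\oplus q_i^{(n)}$ after applying $\fhi^{(k)}_{n,\infty}$. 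For (iii) I would use that, since $X_n^{(k)}$ is connected, any tracial state is proportional to the fibre rank on projections, so that $\tau(q_1^{(n)}\oplus\cdots\oplus q_n^{(n)})=R/(n+1)!$ with $R=\sum_{i=1}^n\kappa(k,i)$ and $\tau(e_n)=1/(n+1)!$. When $k<\infty$ one computes $R=k\sum_{i=1}^n\sigma(i)=k\big((n+1)!-1\big)$, whence $\tau(q_1^{(n)}\oplus\cdots\oplus q_n^{(n)})=k\big(1-1/(n+1)!\big)\to k$; when $k=\infty$ one has $R=\sum_{i=1}^n i^2\,i!$ and $R/(n+1)!\to\infty$. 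In both cases $\tau(e_n)=1/(n+1)!\to 0$.

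The crux is condition (ii): I must ensure $e_n\not\precsim q_1^{(n)}\oplus\cdots\oplus q_n^{(n)}$, i.e.\ that the Euler-class obstruction survives \emph{every} connecting map, not merely at stage $n$. Setting $W:=\bigoplus_{i=1}^n\bar q_i$ and iterating \eqref{eq-con.maps.}, the bundle $(\fhi^{(k)}_{n,m})^\ast(W)$ splits as $\pi^{1\ast}(W)$ plus, for each $n<l\leq m$, exactly $c_l:=R\,\sigma(l)/(n+1)!$ copies of the line bundle $\zeta_l^{(k)}$ coming from the point evaluations (here $\pi^{1\ast}$ is pullback along the projection $X_m^{(k)}\to X_n^{(k)}$ that forgets the new factors). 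Its Euler class is therefore $\pi^{1\ast}\big(\prod_{i=1}^n u_i^{\kappa(k,i)}\big)\cdot\prod_{l=n+1}^m u_l^{c_l}$, and by Künneth this is non-zero provided $c_l\leq \kappa(k,l)$ for all $l>n$. This last inequality reduces exactly to $R\leq k(n+1)!$ (when $k<\infty$) and to $R\leq l(n+1)!$ for all $l>n$ (when $k=\infty$), both of which hold for the chosen $R$; this is precisely where the parameter $k$ governs how large the bundles, and hence the traces, may be. Since $(\fhi^{(k)}_{n,m})^\ast(\bar e_n)=\theta_1\oplus(\text{copies of }\zeta_l^{(k)})$ retains a trivial summand, its Euler class vanishes; were $\fhi^{(k)}_{n,m}(\bar e_n)\precsim\fhi^{(k)}_{n,m}(W)$ to hold, multiplicativity of the Euler class under the resulting bundle decomposition would force $e\big((\fhi^{(k)}_{n,m})^\ast(W)\big)=0$, a contradiction. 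Thus $\fhi^{(k)}_{n,m}(\bar e_n)\not\precsim\fhi^{(k)}_{n,m}(W)$ for every $m>n$.

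Finally I would upgrade this to the inductive limit: because $\bar e_n$ and $W$ are projections and $\Cu(\V_k)=\varinjlim\Cu(A_m^{(k)})$, a Cuntz subequivalence $e_n\precsim q_1^{(n)}\oplus\cdots\oplus q_n^{(n)}$ in $\V_k\ten\K$ would already be witnessed at some finite stage $m$ (equivalently, one argues as in the proof of Theorem \ref{theo-VI.main.result.} via a uniform lower bound on $\norm v^\ast\fhi^{(k)}_{n,m}(W)v-\fhi^{(k)}_{n,m}(\bar e_n)\norm$ together with \cite[Lemma 2.1]{toms-the.classification.problem.}), contradicting the previous paragraph. The main obstacle, requiring the most care, is exactly this persistence of the Euler-class obstruction through all connecting maps: it forces the bookkeeping $c_l\leq\kappa(k,l)$, which is what ties the admissible size of $R$ — and hence the limiting trace $k$ — to the combinatorics of $\kappa$ and $\sigma$.
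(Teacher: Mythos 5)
Your proposal is correct and follows essentially the same route as the paper: the same projections (pullbacks of $\kappa(k,i)$ copies of the universal line bundles for the $q_i^{(n)}$, a trivial line bundle for $e_n$), the same dimension argument for (i), the same rank-ratio trace computation for (iii), and the same Euler-class-plus-K\"unneth obstruction for (ii), pulled back to a finite stage via compactness of the projection class and continuity of $\Cu(-)$. The only cosmetic difference is in (ii): you track the multiplicities $c_l=R\sigma(l)/(n+1)!$ of $\zeta_l^{(k)}$ in the image bundle directly and verify $c_l\leq\kappa(k,l)$, whereas the paper dominates the image bundle by $\eta_m=\bigoplus_{j=1}^m\kappa(k,j)\zeta_j^{(k)}$ at every stage $m$ and uses $e(\eta_m)\neq 0$ --- the same inequality in disguise.
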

\begin{proof}
We fix an arbitrary $k\in\N_\infty$, and omit $k$ from our notation. 
For each $l\in\N$ and $j=1,\dots,l$, let $\rho^l_j\colon X_l=X^{(k)}_l\to \C P^{\kappa(k,j)}$ denote the coordinate projection. Note that $\rho^l_l=\pi^{2}_{k,l}$ and $\rho^l_j\circ\pi^1_{k,l+1}=\rho^{l+1}_j$ for all $l\geq 1$ and $1\leq j\leq l$. For each $n\in\N$ and $i=1,\dots,n$, let 
$\overline q^{(n)}_i\in A_n\ten\K$ denote the projection corresponding to the vector bundle
\( \eta_{n,i}:= \rho_{i}^{n*}(\kappa(k,i)\cdot\gamma_{\kappa(k,i)})\) over $X_n$, where $\gamma_{\kappa(k,i)}$ is as defined above, and $r_n\in A_n\ten\K$ denote the projection corresponding to the trivial line bundle $\theta_1$. Let $q^{(n)}_i:=\fhi_{n,\infty}(\overline q^{(n)}_i)$ and $e_n:=\fhi_{n,\infty}(r_n)$. We prove that the projections $e_n,q^{(n)}_1,\dots,q^{(n)}_n$ have the properties claimed in the above statement. In the interest of brevity, let
\[
\eta_n:=\eta_{n,1}\oplus \eta_{n,2}\oplus\dots\oplus\eta_{n,n}.
\]
It follows from the Künneth formula, and the fact $e(\gamma_l)^l\neq 0$ for all $l\in\N$, that the Euler class $e(\eta_n)\in H^\ast(X_n)$ is non-zero for each $n\in\N$.

(i): It suffices to prove that $2\kappa(k,i)\cdot\gamma_{\kappa(k,i)}$ dominates a
trivial line bundle for each $i\in\N$. However, this follows from
straightforward dimension considerations. Indeed, since
\begin{align*}
2\cdot\text{rank}(2\kappa(k,i)\cdot\gamma_{\kappa(k,i)})-1\geq 2\kappa(k,i)=\text{dim}(\C P^{\kappa(k,i)}),
\end{align*}
the desired result follows (see for instance \cite[Proposition 9.1.1]{husemoller}).

(ii): Note that it follows from \eqref{eq-con.maps.} that
\[
\fhi_l^\ast(\eta_l)\cong \big(\bigoplus_{j=1}^l\kappa(k,j)\cdot\rho^{(l+1)\ast}_j(\gamma_{\kappa(k,j)})\big)\oplus (l+1)\text{rank}(\eta_l)\cdot\rho^{(l+1)\ast}_{l+1}(\gamma_{\kappa(k,l+1)}).
\]
Since \((l+1)\text{rank}(\eta_l)=(l+1)\sum_{i=1}^l \kappa(k,i)\leq \kappa(k,l+1)\), it follows that $\fhi_l^\ast(\eta_l)\precsim \eta_{l+1}$. By induction, $\fhi_{l,m}^\ast(\eta_l)\precsim \eta_m$, for all $m\geq l$.
Furthermore, again by \eqref{eq-con.maps.}, we have that $\theta_1\precsim\fhi_{l,m}^\ast(\theta_1)$. 

Now, assume that $e_n\precsim q^{(n)}_1\oplus\cdots\oplus q^{(n)}_n$. Since $e_n$ is compact in $\Cu(\V_k)$ it follows from continuity of $\mathbf{Cu}(-)$ that there exists some $m>n$ such that
\[
\theta_1\precsim \fhi_{n,m}^\ast(\theta_1)\precsim \fhi_{n,m}^\ast(\eta_n)\precsim\eta_m.
\]
But since the Euler class of the right hand side is non-zero, this is a contradiction.

(iii): Recall that $\xi_n$ denotes the vector bundle over $X_n$ corresponding to the unit $p_n\in A_n$. Since each $q^{(n)}_i$ is a projection and $\fhi_{i,\infty}$ is unital, we have 
\begin{align*}
\tau(q_1^{(n)}\oplus q^{(n)}_2\oplus\dots\oplus q^{(n)}_n)&=
\frac{ \text{rank}(\eta_n) }{ \text{rank}(\xi_n) }
=\frac{\sum_{l=1}^n \kappa(k,l)}{\sum_{l=0}^n\sigma(l)}\\
&=\frac{\sum_{l=1}^n \kappa(k,l)}{(n+1)!}.
\end{align*}
Hence, when $k<\infty$,
\[
\tau(q_1^{(n)}\oplus q_2^{(n)}\oplus\dots\oplus q^{(n)}_n)=\frac{\big(k\sum_{l=0}^n\sigma(l)\big)-k}{(n+1)!}=\frac{k(n+1)!-k}{(n+1)!}\to k,
\]
while the case $k=\infty$ follows from the observation that
\[
\frac{\sum_{l=1}^n l\sigma(l)}{(n+1)!}\geq\frac{n\sigma(n)}{(n+1)!}=\frac{n^2}{(n+1)} \to \infty.
\]
Similarly, for arbitrary $1\leq k \leq \infty$ we find that
\(\tau(e_n)=\frac{1}{(n+1)!}\to 0\).
\end{proof}

\begin{cor}\label{cor-type.II.characters.}
For each $1\leq k \leq \infty$, the central sequence algebra $F(\mathcal V_k)$ has at least one character.
\end{cor}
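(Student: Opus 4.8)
The plan is to deduce this corollary by contraposition, feeding the explicit ``bad'' families of projections constructed in Proposition \ref{prop-comparability} into the divisibility/comparison obstruction of Proposition \ref{cor-skarp.kr.}. All the real work has already been done in those two propositions; what remains is a direct matching of hypotheses.

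First I would fix $k\in\N_\infty$ and suppose, toward a contradiction, that $F(\V_k)$ has \emph{no} characters. Then Proposition \ref{cor-skarp.kr.}, applied with $m=2$, produces some $n\in\N$ with the following property: whenever $x,y_1,\dots,y_n\in\Cu(\V_k)$ satisfy $x\leq 2y_i$ for all $i=1,\dots,n$, it follows that $x\leq\sum_{i=1}^n y_i$.

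Next I would apply Proposition \ref{prop-comparability} with \emph{this} particular $n$, obtaining projections $e_n,q^{(n)}_1,\dots,q^{(n)}_n\in\V_k\ten\K$. Setting $x:=\langle e_n\rangle$ and $y_i:=\langle q^{(n)}_i\rangle$ in $\Cu(\V_k)$, property (i) of Proposition \ref{prop-comparability} says precisely that $e_n\precsim q^{(n)}_i\oplus q^{(n)}_i$, i.e.\ $x\leq 2y_i$ for each $i=1,\dots,n$. The comparison property from the previous paragraph therefore forces $x\leq\sum_{i=1}^n y_i$, that is, $e_n\precsim q^{(n)}_1\oplus\cdots\oplus q^{(n)}_n$. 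This contradicts property (ii) of Proposition \ref{prop-comparability}, and so $F(\V_k)$ must admit a character.

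I do not expect any genuine obstacle in this step: property (iii) on traces is irrelevant here (it is needed only for the appendix), and the single subtlety is that Proposition \ref{cor-skarp.kr.} supplies one $n$ depending on $m$, whereas Proposition \ref{prop-comparability} supplies counterexamples for \emph{every} $n$, so one simply uses the $n$ handed over by the former with $m=2$.
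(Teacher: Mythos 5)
Your proof is correct and is precisely the paper's argument: the paper's proof of this corollary is a one-line appeal to Proposition \ref{prop-comparability} parts (i) and (ii) together with Proposition \ref{cor-skarp.kr.} (with $m=2$), which is exactly the contradiction you spell out. Your observation about the order of quantifiers (take the $n$ produced by Proposition \ref{cor-skarp.kr.} and then invoke Proposition \ref{prop-comparability} for that $n$) is the right way to match the hypotheses.
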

\begin{proof}
This is a straightforward consequence of Proposition \ref{prop-comparability} parts (i) and (ii) and Proposition \ref{cor-skarp.kr.} (with $m=2$).
\end{proof}

\begin{remark}
An alternative, albeit slightly artificial, statement of the above corollary is that for each $1\leq k\leq \infty$, the $k$'th Villadsen algebra of the second type $\mathcal V_k$ absorbs $\JS$ if and only if $F(\mathcal V_k)$ has no characters. Indeed, it follows from \cite[Proposition 11]{villadsen-sr.} that $K_0(\mathcal V_k)$ is not weakly unperforated for any $1\leq k<\infty$, and essentially the same proof applies to $k=\infty$. Hence \cite[Theorem 1]{gong.jiang.su-obstructions.to.Z.stability.} implies that $\mathcal V_k\ten\JS\not\cong \mathcal V_k$ for each $1\leq k\leq \infty$. As stated in the above corollary, $F(\mathcal V_k)$ has at least one character for each $1\leq k\leq \infty$, whence the desired result follows.
\end{remark}

\begin{remark}
It was proven in \cite{kirchberg.rordam-char.} that if $A$ is a unital $C^\ast$-algebra with $T(A)\neq\emptyset$ and property (SI), then $F(A)$ has a character if and only if $F(A)/(F(A)\cap J(A))$ has a character (see \cite{kirchberg.rordam-char.} for a definition of $J(A)$). It follows from the above corollary that this is no longer true, if the assumption of property (SI) is removed.
Indeed, let $1\leq k\leq\infty$ be arbitrary and $\mathscr N_k$ denote the weak closure of $\pi_\tau(\V_k)\subseteq B(\mathcal H_\tau)$, where $\pi_\tau$ denotes the GNS representation of $\V_k$ with respect to the tracial state $\tau$. Since $\V_k$ has a unique tracial state, it is a straightforward consequence of \cite[Lemma 2.1]{sato-discrete.amenable.actions.} that
\[
F(\V_k)/(F(\V_k)\cap J(\V_k))\cong \mathscr N_k^\omega\cap \mathscr N_k'.
\]
Here $\mathscr N_k^\omega$ denotes the von Neumann ultrapower of $\mathscr N_k$ with respect to the tracial state $\tau$. Since $\mathscr N_k$ is an injective $\mathrm{II}_1$-factor, it follows that $\mathscr N_k\cong \mathcal R$, where $\mathcal R$ denotes the hyperfinite $\mathrm{II}_1$ factor. In particular, there exists a unital embedding $\mathcal R\to F(\V_k)/(F(\V_k)\cap J(\V_k))$ whence $F(\V_k)/(F(\V_k)\cap J(\V_k))$ does not have any characters. Hence, the above corollary shows that the assumption of property (SI) in \cite[Proposition 3.19]{kirchberg.rordam-char.} is indeed necessary.
\end{remark}

Proposition \ref{prop-comparability} (iii) allows us to compute the radius of comparison for each $\V_k$ (the radius of comparison was originally defined by Toms in \cite{toms-flat.dimension.growth}, and an extended definition was given in \cite{rc-algebraic.} and shown to agree with the original definition for all sufficiently finite $C^\ast$-algebras, e.g., unital, simple and stably finite $C^\ast$-algebras).
\begin{cor}\label{cor-VI.II.rc.}
$\textup{rc}(\V_k)=k$ for each $1\leq k<\infty$.
\end{cor}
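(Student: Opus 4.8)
The plan is to prove the two inequalities $\textup{rc}(\V_k)\ge k$ and $\textup{rc}(\V_k)\le k$ separately. Throughout I use that $\V_k$ is a nuclear, unital, simple AH algebra with a unique tracial state $\tau$ (Theorem~\ref{theo-villadsens.resultater.}), so that all quasitraces are traces and the only lower semicontinuous dimension function on $\Cu(\V_k)$ is $d_\tau$; in particular $d_\tau(p)=\tau(p)$ for every projection $p\in\V_k\ten\K$. Consequently $r$-comparison for $\V_k$ reduces to the single implication ``$d_\tau(a)+r<d_\tau(b)\Rightarrow a\precsim b$'', and $\textup{rc}(\V_k)$ is the infimum of those $r\ge 0$ for which it holds.

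For the lower bound I would feed Proposition~\ref{prop-comparability} into this reformulation. Fix $k<\infty$ and let $e_n$ and $q^{(n)}:=q^{(n)}_1\oplus\cdots\oplus q^{(n)}_n$ be the projections constructed there. Part (ii) gives $e_n\not\precsim q^{(n)}$ for every $n$, while part (iii) gives $\tau(q^{(n)})-\tau(e_n)\to k$. If $\V_k$ had $r$-comparison for some $r<k$, then choosing $n$ so large that $\tau(e_n)+r<\tau(q^{(n)})$ (possible since the gap tends to $k>r$) and using $d_\tau=\tau$ on projections would force $e_n\precsim q^{(n)}$, contradicting part (ii). Hence $\V_k$ fails $r$-comparison for every $r<k$, and $\textup{rc}(\V_k)\ge k$.

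For the upper bound the key numerical input is that the building blocks have dimension-to-rank ratio exactly $2k$: from the description of the spaces one computes $\dim X^{(k)}_n=2k+\sum_{l=1}^n 2\kappa(k,l)=2k\,(n+1)!$, while \eqref{eq-unit.for.k.} gives $\textup{rank}(\xi^{(k)}_n)=\sum_{l=0}^n\sigma(l)=(n+1)!$, so that $\tfrac12\dim X^{(k)}_n=k\cdot\textup{rank}(\xi^{(k)}_n)$ for every $n$. To establish $r$-comparison for a fixed $r>k$ I would take positive elements $a,b\in\V_k\ten\K$ with $d_\tau(a)+r<d_\tau(b)$, realize them (up to an arbitrarily small cut-down of $b$) as images $\fhi^{(k)}_{N,\infty}(\hat a),\fhi^{(k)}_{N,\infty}(\hat b)$ of elements over $X^{(k)}_N$, and push them forward to $X^{(k)}_M$ for $M\gg N$. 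Because $\V_k$ has a unique trace, the point-evaluation part of $\fhi^{(k)}_{N,M}$ dominates, so the pointwise ranks of $\fhi^{(k)}_{N,M}(\hat a)$ and $\fhi^{(k)}_{N,M}(\hat b)$ concentrate, as $M\to\infty$, around $\tau(a)\,\textup{rank}(\xi^{(k)}_M)$ and $\tau(b)\,\textup{rank}(\xi^{(k)}_M)$ respectively. Since $r>k$ and $\tfrac12\dim X^{(k)}_M=k\,\textup{rank}(\xi^{(k)}_M)$, this yields, for all large $M$, the pointwise estimate $\textup{rank}\big(\fhi^{(k)}_{N,M}(\hat a)\big)(x)+\tfrac12\dim X^{(k)}_M\le \textup{rank}\big(\fhi^{(k)}_{N,M}(\hat b)\big)(x)$ on $X^{(k)}_M$. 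A comparison-up-to-half-the-dimension result for positive elements over a finite-dimensional complex (the vector-bundle estimates of \cite{husemoller} used above, in the form systematized in \cite{rc-algebraic.} and \cite{toms-flat.dimension.growth}) then gives $\fhi^{(k)}_{N,M}(\hat a)\precsim\fhi^{(k)}_{N,M}(\hat b)$ in $A^{(k)}_M$, whence $a\precsim b$ in $\V_k$. Thus $\V_k$ has $r$-comparison for every $r>k$, i.e.\ $\textup{rc}(\V_k)\le k$, and combining the two bounds gives $\textup{rc}(\V_k)=k$.

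The main obstacle is the concentration step in the upper bound: one must show that the \emph{spatial variation} of the rank of $\fhi^{(k)}_{N,M}(\hat b)$ — caused by the single coordinate-projection summand of each connecting map — is negligible relative to $\tfrac12\dim X^{(k)}_M=k\,\textup{rank}(\xi^{(k)}_M)$ as $M\to\infty$. This is precisely the mechanism responsible for the uniqueness of the trace, and making the two error terms quantitatively compatible (so that the strict inequality $r>k$ survives the estimate uniformly in $x\in X^{(k)}_M$) is the delicate part; the lower bound, by contrast, is an immediate consequence of Proposition~\ref{prop-comparability}.
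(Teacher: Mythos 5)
Your overall skeleton matches the paper's proof exactly: the lower bound $\textup{rc}(\V_k)\geq k$ comes from Proposition \ref{prop-comparability} parts (ii) and (iii), and the upper bound from the ratio $\dim(X^{(k)}_n)/(2\,\textup{rank}(p^{(k)}_n))=k$. Your lower bound argument is complete and is, in substance, the paper's own (the paper fixes $\e>0$ and picks projections $e,q$ with $\tau(e)<\e/2$, $\tau(q)>k-\e/2$ and $e\not\precsim q$; you run the same contradiction against $r$-comparison for $r<k$), and your numerical computation $\dim X^{(k)}_n=2k(n+1)!$, $\textup{rank}(\xi^{(k)}_n)=(n+1)!$ agrees with the paper's.

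The problem is the upper bound, where you replace a citation by an unfinished re-derivation. The paper gets $\textup{rc}(\V_k)\leq k$ in one line from \cite[Corollary 5.2]{toms-smooth.min.} together with \cite[Proposition 3.2.4]{rc-algebraic.}, which say precisely that for an AH algebra with decomposition $\big(p_n(C(X_n)\ten\K)p_n,\fhi_n\big)$ one has $\textup{rc}\leq\limsup_n \dim(X_n)/(2\,\textup{rank}(p_n))$. What you sketch --- pushing $\hat a,\hat b$ forward, letting the normalized rank functions concentrate, and invoking half-dimension comparison over $X^{(k)}_M$ --- is an attempt to reprove that theorem by hand, and the step you yourself flag as ``the delicate part'' is a genuine gap: you do not establish the uniform (in $x\in X^{(k)}_M$) concentration of the normalized rank of $\fhi^{(k)}_{N,M}(\hat b)$, nor its identification with the correct constant. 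Note also that the constant is governed by $d_\tau(b)$, not $\tau(b)$ as written in your sketch; for positive elements that are not projections these differ, and conflating them would break the estimate. Making this rigorous requires dealing with lower semicontinuous rank functions, the $(b-\e)_+$ cut-downs needed to pass between finite stages and the limit algebra, and quantitative control of the point evaluations $y^{(k)}_{n,j}$ --- which is exactly the content of the results cited above. The efficient repair is therefore to delete the concentration argument and quote \cite[Corollary 5.2]{toms-smooth.min.} and \cite[Proposition 3.2.4]{rc-algebraic.} directly, after which your proof coincides with the paper's.
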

\begin{proof}
Fix $1\leq k<\infty$. By \cite[Corollary 5.2]{toms-smooth.min.} and \cite[Proposition 3.2.4]{rc-algebraic.} 
\[
\text{rc}(\V_k)\leq \lim_{n\to\infty}\frac{\text{dim}(X^{(k)}_n)}{2\cdot\text{rank}(p^{(k)}_n)}= \lim_{n\to\infty}\frac{k(n+1)!}{(n+1)!}=k.
\] 
Fix arbitrary $\e>0$. By Proposition \ref{prop-comparability} parts (ii) and (iii) we may choose projections $e, q\in \V_k\ten\K$ such that $\tau(e)<\e/2$, $\tau(q)>k-\e/2$, while $e\not\precsim q$. In particular $d_\tau(e)+(k-\e)=\tau(e)+
k-\e<k-\e/2 < d_\tau(q)$, while $e\not\precsim q$. Since $\e>0$ was arbitrary, it follows that $\text{rc}(\V_k)\geq k$.
\end{proof}

The proof of the above corollary can easily be modified to show that $\text{rc}(\V_\infty)=\infty$ (or even that $r_{\V_\infty,\infty}=\infty$; see \cite{rc-algebraic.} for a definition of $r_{\V_\infty,\infty}$), but as evidenced by Theorem \ref{theo-non.cfp.}, in this case a stronger statement holds

%
%
%
%
%
%

\appendix
\section{Appendix: The failure of the Corona Factorization Property for the Villadsen algebra $\V_\infty$}

\qquad{\large By Joan Bosa\footnote{School of Mathematics and Statistics, University of Glasgow, 15 University Gardens, G12 8QW, Glasgow, UK. {\it E-mail address: joan.bosa@glasgow.ac.uk} } and Martin S. Christensen}\\
\newline
In this appendix we prove that the Villadsen algebra $\V_\infty$ does not satisfy the Corona Factorization Property (CFP), thereby improving the result, from an earlier version of this paper, that $\V_\infty$ does not satisfy the $\omega$-comparison property.

Both $\omega$-comparison and the CFP may be regarded as comparison properties of the Cuntz semigroup invariant, and both properties are related to the question of when a given C*-algebra is stable (see for instance \cite[Proposition 4.8]{opr-cfp.}). In particular, a simple, separable $C^\ast$-algebra $A$ has the {\bf CFP} if and only if, whenever $x,y_1,y_2,\dots$ are elements in $\Cu(A)$ and $m\geq 1$ is an integer satisfying $x\leq my_j$ for all $j\geq 1$, then $x\leq \sum_{i=1}^\infty y_i$ (\cite[Theorem 5.13]{opr-cfp.}). On the other hand, given a simple $C^\ast$-algebra $A$, $\Cu(A)$ has {\bf $\omega$-comparison} if and only if $\infty=x\in\Cu(A)$ whenever $f(x)=\infty$ for all functionals $f$ on $\Cu(A)$ (\cite[Proposition 5.5]{bosa.petzka-comparison.}). Recall that a \textit{functional} $f$ on the Cuntz semigroup $\Cu(A)$ of a $C^\ast$-algebra $A$ is an ordered semigroup map $f\colon \Cu(A)\to[0,\infty]$ which preserves suprema of increasing sequences. In particular, the latter comparability condition is satisfied for all unital $C^\ast$-algebras $A$ with finite radius of comparison by \cite{rc-algebraic.}.

From the above characterization it follows that any separable $C^\ast$-algebra $A$ whose Cuntz semigroup $\Cu(A)$ has the $\omega$-comparison property also has the CFP (see \cite[Proposition 2.17]{opr-cfp.}). Whether the converse implication is true remains an open question.
This question was considered by the first author of this appendix and Petzka in \cite{bosa.petzka-comparison.}, where the failure of the converse implication was shown just in the algebraic framework of the category $\Cu$. However, it was emphasized there that a more analytical approach will be necessary in order to verify (or disprove) the converse implication for any (simple) $C^\ast$-algebra $A$.

The Villadsen algebras have been used several times to certify bizarre behaviour in the theory of C*-algebras; hence, after Ng and Kucerosvky showed in \cite{kucerosvky.ng-perforation.and.cfp.} that $\mathcal V_2$ satisfies the CFP, one wonders whether it satisfies the $\omega$-comparison or not. From Corollary \ref{cor-VI.II.rc.} (together with \cite[Theorem 4.2.1]{rc-algebraic.}) one gets that, for all $1\leq n<\infty$, the Cuntz semigroups $\Cu(\V_n)$ have the $\omega$-comparison property and hence the CFP. But this is not the case for the $C^\ast$-algebra $\V_\infty$. As demonstrated below, it does not have the CFP (and hence $\Cu(\V_\infty)$ does not have $\omega$-comparison). Notice that although $\V_\infty$ has a different structure than $\V_n$, it does not witness the potential non-equivalence of $\omega$-comparison and the CFP for unital, simple and stably finite $C^\ast$-algebras.


\begin{theo}\label{theo-non.cfp.}
Let $\V_\infty$ be given as above. Then $\V_\infty$ is a unital, simple, separable and nuclear $C^\ast$-algebras with a unique tracial state such that the Cuntz semigroup $\Cu(\V_\infty)$ does not have the Corona Factorization Property for semigroups.
\end{theo}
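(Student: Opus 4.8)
The plan is to verify the failure of the Corona Factorization Property directly through the semigroup characterization recorded above (from \cite[Theorem 5.13]{opr-cfp.}): it suffices to produce an integer $m$, an element $x\in\Cu(\V_\infty)$ and a sequence $(y_j)_{j\ge1}$ in $\Cu(\V_\infty)$ with $x\le my_j$ for every $j$ but $x\not\le\sum_{j=1}^\infty y_j$. I would take $m=2$ and arrange for $x$ and all the $y_j$ to be classes of projections in $\V_\infty\ten\K$; then $x$ is compact, so $x\le\sum_{j}y_j=\sup_N\sum_{j\le N}y_j$ would already force $x\le\sum_{j\le N}y_j$ for some finite $N$, and it is enough to defeat every finite partial sum.

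The obstruction should be modelled on Proposition \ref{prop-comparability}: the entire point of that proposition is that, at level $n$, the projections $q^{(n)}_1,\dots,q^{(n)}_n$ satisfy $e_n\precsim 2q^{(n)}_i$ (a dimension/splitting domination, cf.\ \cite[Proposition 9.1.1]{husemoller}) while $e_n\not\precsim q^{(n)}_1\oplus\cdots\oplus q^{(n)}_n$, because the sum carries the full Euler class $\prod_{i=1}^n e(\gamma_{\kappa(\infty,i)})^{\kappa(\infty,i)}\neq0$ by the Künneth formula whereas $e_n$ is Euler--trivial. I would build the $y_j$ as projections supported on successive, \emph{distinct} projective factors $\C P^{\kappa(\infty,j)}$ --- high-multiplicity pull-backs of the universal bundle $\gamma_{\kappa(\infty,j)}$ --- so that $2y_j$ splits off a trivial summand large enough to dominate a fixed Euler--trivial class $x$, while $y_j$ itself retains a non-zero top Euler class. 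Since the factors are distinct, Künneth makes the total Euler class of every finite sum $\sum_{j\le N}y_j$ non-zero, so no such sum can dominate the Euler--trivial class $x$. To organise this along the inductive limit I would use that $Q^{(n)}:=q^{(n)}_1\oplus\cdots\oplus q^{(n)}_n$ is \emph{increasing} in $n$ (this is exactly the inequality $\fhi_n^{\ast}(\eta_n)\precsim\eta_{n+1}$ established inside the proof of Proposition \ref{prop-comparability}(ii)) while the classes $e_n$ are \emph{decreasing} in $n$, which is what allows one fixed class to serve as the left-hand side against all levels simultaneously.

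The hard part is reconciling the two demands placed on the $y_j$. The inequality $x\le2y_j$ forces each $y_j$ to \emph{cohomologically cover} $x$; but when the comparison is carried out at a high level $m$, the representative of a fixed class is a pull-back $\fhi_{\,\cdot\,,m}^{\ast}(\theta_1)$ whose Chern classes are spread across \emph{all} the intervening projective factors rather than a single one. This forces the $y_j$ to overlap on the low factors --- and any genuine common lower bound $z\succ0$ would give $\sum_j y_j\succeq\sup_N Nz$, i.e.\ the largest element of $\Cu(\V_\infty)$, which dominates everything and ruins $x\not\le\sum_j y_j$. This tension is precisely the extra content of the Corona Factorization Property beyond the failure of $\omega$-comparison: the latter needs only the single relation $e_1\not\le\sup_n Q^{(n)}$ together with $\tau(\sup_n Q^{(n)})=\infty$, which follows cleanly since $e_1\succeq e_n\not\le Q^{(n)}$ for all $n$ and $e_1$ is compact.

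I therefore expect the main obstacle to be the resolution of this race, and I expect it to hinge on the extravagant growth of the disc factors $\D^{n\sigma(n)^2}$ --- the very feature responsible for $\mathrm{sr}(\V_\infty)=\infty$. These factors supply enough stable room to keep the fresh, level-$j$ Euler contribution of each $y_j$ ahead of the accumulated low-factor overlap, so that the Euler obstruction survives at \emph{every} finite stage while $x\le2y_j$ is simultaneously maintained; the trace estimates of Proposition \ref{prop-comparability}(iii) are what quantify this race and certify that the partial sums have the intended size. Once the sequence $(y_j)$ and the class $x$ are in hand, the conclusion is immediate from the characterization: $x\le2y_j$ for all $j$ while $x\not\le\sum_{j\le N}y_j$ for every $N$, hence $x\not\le\sum_{j=1}^\infty y_j$, so $\Cu(\V_\infty)$ fails the Corona Factorization Property for semigroups.
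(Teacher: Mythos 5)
Your skeleton does match the paper's proof: the same characterization of the CFP from \cite[Theorem 5.13]{opr-cfp.}, the reduction to finite partial sums via compactness of projection classes, the choice of $y_j$ as high-multiplicity pull-backs of universal line bundles sitting on \emph{fresh} projective factors along a sparsely chosen subsequence of levels, and the K\"unneth/Euler-class obstruction recycled from the proof of Proposition \ref{prop-comparability}(ii). (The paper takes $x=\langle 1_{\V_\infty}\rangle$, $m=5$ and multiplicities $k(n)=\tfrac12\lambda(l(n))$; your $m=2$ is not fatal, but it forces multiplicities strictly above $\tfrac12\lambda(l(n))$ and a correspondingly tighter Euler budget, so the constants would have to be redone.)

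The genuine gap is in your resolution of what you correctly identify as the hard part, namely securing $x\le m y_j$ for all $j$. You propose that this "hinges on the extravagant growth of the disc factors $\D^{n\sigma(n)^2}$", which "supply enough stable room". This is backwards: those disc factors are precisely the \emph{obstacle}, not the mechanism. They are what make $\textup{dim}(X_n)/\rank(p_n)\to\infty$ (whence $\textup{rc}(\V_\infty)=\infty$ and $\textup{sr}(\V_\infty)=\infty$), and over $X_{l(n)}$ the dimension ($\geq 2\,l(n)\sigma(l(n))^2$) dwarfs the rank $m\,k(n)\le m\lambda(l(n))$ of any fixed multiple of $y_j$, so no rank-versus-dimension argument carried out over $X_{l(n)}$ can dominate the unit. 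The paper's proof works by \emph{eliminating} the disc factors: it factors through the coordinate projection $\overline\pi_n\colon X_n\to Y_n$ onto the product of projective spaces alone, where $\lim_n \textup{dim}(Y_n)/2\lambda(n)= 1$ by \eqref{eq-dimension.growth.}, notes that $\overline\psi_n^\ast(\overline\xi_n)\cong\xi_n$ so that domination over $Y_n$ pushes forward to domination over $X_n$, and only then applies Goodearl's Riesz decomposition theorem \cite[Theorem 2.5]{goodearl-riesz.decomposition.} to get $\langle 1_{\V_\infty}\rangle\le 5x_n$. The second quantitative ingredient, which your "race" paragraph gestures at but does not supply, is the inductive choice of levels \eqref{eq-fast.enough.growth}: by \eqref{eq-con.maps.2.} the pull-back debris of $x_1,\dots,x_{n-1}$ spreads onto \emph{every} later projective factor, and \eqref{eq-fast.enough.growth} is exactly what guarantees that this debris together with $k(n)\zeta_{l(n)}$ still fits inside the budget $\bigoplus_s\lambda(s)\zeta_s$, keeping the top Chern class nonzero at every finite stage. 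Two smaller corrections: the trace estimates of Proposition \ref{prop-comparability}(iii) play no role in this proof (they are used for Corollary \ref{cor-VI.II.rc.}), and your concern that $x\le 2y_j$ forces the $y_j$ to "cohomologically cover" $x$ and hence overlap on low factors is unfounded --- in the paper's construction each $x_n$ is literally $k(n)$ copies of $\zeta_{l(n)}$ on a single fresh factor, the domination of the unit being purely a rank phenomenon over $Y_{l(n)}$, and the $x_n$ admit no common nonzero lower bound.
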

\begin{proof}
We use the notation introduced above, with $k=\infty$ fixed and omitted. Additionally, for each $n\geq 1$, let $\lambda(n):=\kappa(\infty,n)=n\sigma(n)=n^2(n!)$, and $Y_n:=\C P^{\lambda(1)}\times\cdots\times \C P^{\lambda(n)}$. Note that $X_n=\D^{n\sigma(n)^2}\times Y_n$, let $\overline\pi_n\colon X_n\to Y_n$ denote the coordinate projection and $\overline\psi_n\colon C(Y_n)\ten\K\to C(X_n)\ten\K\cong A_n\ten\K$ denote the ${}^\ast$-homomorphism induced by $\overline\pi$. 

For each $n\geq 1$ and $1\leq j\leq n$, let $\rho_{n,j}\colon Y_n\to\C P^{\lambda(j)}$ denote the projection map and let $\overline\zeta_{n,j}$ denote the vector bundle $\rho_{n,j}^\ast(\gamma_{\lambda(j)})$ over $Y_n$. To avoid overly cumbersome notation, we simply write $\overline \zeta_j$ for $\overline\zeta_{n,j}$ whenever $j\leq n$. Furthermore, for each $n\geq 1$, let $\overline\xi_n$ denote the vector bundle over $Y_n$ given by 
\(\theta_1\oplus\sigma(1)\overline \zeta_1\oplus\cdots\oplus \sigma(n)\overline\zeta_n\). Recall that, for each $j\geq 1$, $\zeta_j$ denotes the vector bundle $\pi^{2\ast}_j(\gamma_{\lambda(j)})$ over $X_j$. To avoid overly cumbersome notation we also let $\zeta_j$ denote the vector bundle $\pi^{1\ast}_{n}\circ\cdots\circ\pi^{1\ast}_{j+1}(\zeta_j)$, whenever $n>j$. With this notation, the vector bundle $\xi_n$ over $X_n$ corresponding to the unit $p_n\in A_n$ may be written 
\(\xi_n\cong \theta_1\oplus \sigma(1)\zeta_1\oplus\cdots\oplus\sigma(n)\zeta_n\).
It is immediately verified that $\overline\psi_n(\overline\zeta_j)\cong \zeta_j$ for all $j\leq n$, and in particular $\psi_n^\ast(\overline\xi_n)\cong \xi_n$. Hence, if $q\in C(Y_n)\ten\K$ is a projection corresponding to a vector bundle $\eta$ satisfying $\overline\xi_n\precsim \eta$, then $p_n\precsim \psi_n(q)$.

Note that, 
\begin{align}
\lim_{n\to\infty}\frac{\textup{dim}(Y_n)}{2\lambda(n)}\leq\lim_{n\to\infty}\frac{n^2(n!)+n\big(\sum_{i=0}^{n-1}\sigma(i)\big)}{n^2(n!)}=1+\lim_{n\to\infty}\frac{1}{n}=1.\label{eq-dimension.growth.}
\end{align}
Furthermore, it follows from \eqref{eq-con.maps.}, by induction, that for any $1\leq n< m$ and an arbitary vector bundle $\eta$ over $X_n$, we have
\begin{align}
\fhi_{n,m}^\ast(\eta)\cong\mu_{m,n}^{\ast}(\eta)\oplus (n+1)\text{rank}(\eta)\zeta_{n+1}
\oplus\cdots\oplus
\frac{\sigma(m)}{(n+1)!}\text{rank}(\eta)\zeta_m,\label{eq-con.maps.2.}
\end{align}
where $\mu_{m,n}:=\pi_{n+1}^1\circ\cdots\circ \pi_m^1\colon X_m\to X_n$. Moreover, we find that
\begin{align}
\lim_{m\to\infty}\frac{\sigma(m)}{(n+1)!\lambda(m)}=\lim_{m\to\infty}\frac{1}{(n+1)!m}=0.\label{eq-con.maps.growth.}
\end{align}
Choose $l(1)\geq 1$ large enough that $\lambda(k)$ is divisible by $4$ and $\frac{5}{4}\lambda(k)\geq \textup{dim}(Y_k)/2>\textup{rc}(C(Y_k)\ten\K)$ for all $k\geq l(1)$, which is possible by \eqref{eq-dimension.growth.}. Set $k(1):=\frac{1}{2}\lambda(l(1))$. Define sequences $(l(n))_{n\geq 1}$ and $(k(n))_{n\geq 1}$ as follows: given $n\geq 2$ and $l(1),\dots,l(n-1)$ choose $l(n)>l(n-1)$ such that 
\begin{align}
\frac{\big(\sum_{j=1}^{l(n-1)}\lambda(j)\big)\sigma(l(n))}{(l(n-1)+1)!\lambda(l(n))}\leq \frac 12,\text{ i.e., }\frac{\big(\sum_{j=1}^{l(n-1)}\lambda(j)\big)\sigma(l(n))}{(l(n-1)+1)!}\leq \frac {\lambda(l(n))}{2},\label{eq-fast.enough.growth}
\end{align}
which is possible by \eqref{eq-con.maps.growth.}, and set $k(n):=\frac12\lambda(l(n))$. Finally, for each $n\geq 1$, let $\overline q_n\in A_{l(n)}\ten\K$ be the projection corresponding to the vector bundle $\zeta_{l(n)}$ over $X_{l(n)}$ and $x_n:=k(n)\langle \fhi_{l(n),\infty}(q_n) \rangle\in \Cu(\V_\infty)$. We aim to show that the sequence $(x_n)_{n\geq1}$ in $\Cu(\V_\infty)$ witnesses the failure of the Corona Factorization Property in $\Cu(\V_\infty)$.

First, we show that $5x_n\geq \langle \mathbf 1_{\V}\rangle=:e$ for all $n\geq 1$. As noted above, it suffices to show that $5k(n)\overline\zeta_{\lambda(l(n))}\geq \overline \xi_{\lambda(l(n))}$. But, by choice of $k(n)$ and $l(n)$ we have that
\[
\rank(5k(n)\overline\zeta_{\lambda(l(n))})=\frac 52\lambda(l(n))\geq \frac{\textup{dim}(Y_{l(n)})}2+\rank(\overline\xi_{l(n)}),
\]
since $\rank(\overline\xi_{l(n)})=(l(n)+1)!\leq \frac{\textup{dim}(Y_{l(n)})}{2}$. The desired result therefore follows from \cite[Theorem 2.5]{goodearl-riesz.decomposition.}.

Next, we show that $e\not\leq\sum_{i=1}^\infty x_i$. Proceeding as in the proof of Proposition \ref{prop-comparability} part (ii), it suffices to prove that 
\[
\langle p_j\rangle\not\leq \langle  \bigoplus_{i=1}^n\fhi_{l(i),j}(q_i) \rangle
\]
for all $j\geq l(n)$ (recall that $p_j\in A_j$ denotes the unit, i.e., the projection corresponding to $\xi_j$). Since $\xi_j$ dominates a trivial line bundle for each $j$, it suffices to prove that the vector bundle corresponding to the right hand side above does not. We do this by proving that
\begin{align}
\bigoplus_{i=1}^n \fhi_{l(i),j}^\ast(k(i)\zeta_{l(i)})\precsim \bigoplus_{s=1}^j\lambda(s)\zeta_s.\label{eq-technical.thing.}
\end{align}
Since the right hand side does not dominate any trivial bundle, by the proof of Proposition \ref{prop-comparability} part (ii), this will complete the proof. Note that it also follows from the proof of Proposition \ref{prop-comparability} part (ii) that \(\fhi_{j,m}^\ast(\bigoplus_{s=1}^j\lambda(s)\zeta_s)\precsim \bigoplus_{s=1}^m\lambda(s)\zeta_s\) for all $m\geq j$. Thus, it suffices to prove that
\[
\bigoplus_{i=1}^{n-1} \fhi_{l(i),l(n)}^\ast(k(i)\zeta_{l(i)})\oplus k(n)\zeta_{l(n)}\precsim \bigoplus_{s=1}^{l(n)}\lambda(s)\zeta_s
\]
for all $n\geq 1$. We proceed by induction. Clearly the statement is true for $n=1$, so suppose it is true for $n-1$ with $n\geq 2$. Then 
\[
\bigoplus_{i=1}^{n-1} \fhi_{l(i),l(n)}^\ast(k(i)\zeta_{l(i)})\oplus k(n)\zeta_{l(n)}\precsim\fhi^\ast_{l(n-1),l(n)}\big(\bigoplus_{s=1}^{l(n-1)}\lambda(s)\zeta_s\big)\oplus k(n)\zeta_{l(n)}.
\]
by induction hypothesis.

Now, letting $N:=\sum_{s=1}^{l(n-1)}\lambda(s)=\rank(\bigoplus_{s=1}^{l(n-1)}\lambda(s)\zeta_s)$, it follows by the choice of $l(n)$ and $k(n)$  (see \eqref{eq-fast.enough.growth}) that $k(n)+\frac{N\sigma(l(n))}{(l(n-1)+1)!}\leq \lambda(l(n))$. Hence, combining the above induction step with \eqref{eq-con.maps.2.}, one has:
 \begin{align*}
 &\bigoplus_{i=1}^{n-1} \fhi_{l(i),l(n)}^\ast(k(i)\zeta_{l(i)})\oplus k(n)\zeta_{l(n)} \\
&  \precsim \fhi_{l(n-1),l(n)}^\ast\big(\bigoplus_{s=1}^{l(n-1)}\lambda(s)\zeta_s\big)\oplus k(n)\zeta_{l(n)}\\
&  \overset{\eqref{eq-con.maps.2.}}{\precsim} \big(\bigoplus_{s=1}^{l(n-1)}\lambda(s)\zeta_s\big)\oplus(l(n-1)+1)N\zeta_{l(n-1)+1}\\
&\qquad\oplus\cdots\oplus\frac{N\sigma(l(n))}{(l(n-1)+1)!}\zeta_{l(n)}\oplus k(n)\zeta_{l(n)}\\ 
&\precsim \bigoplus_{s=1}^{l(n)}\lambda(s)\zeta_s.
 \end{align*}
Thus, the desired result follows.
\end{proof}


\bigskip \footnotesize \noindent {\scshape Department of Mathematical Sciences,
University of Copenhagen\\
Universitetsparken 5,
DK-2100 Copenhagen Ø}
\par\medskip\noindent {\itshape E-mail address:} {\ttfamily martin.christensen@math.ku.dk}
\end{document}